\definecolor{dark-red}{rgb}{0.5,0.15,0.15}
\definecolor{dark-blue}{rgb}{0.15,0.15,0.6}
\definecolor{dark-green}{rgb}{0.15,0.6,0.15}
\numberwithin{equation}{section}
\newcommand{\F}{\mathbb{F}}
\newcommand{\Z}{\mathbb{Z}}
\newcommand{\C}{\mathbb{C}}
\newcommand{\xr}{\xrightarrow}
\newcommand{\cal}{\mathcal}
\newcommand*{\myfnsymbolsingle}[1]{%
  \ensuremath{%
    \ifcase#1
    \or 
      *%
    \or 
      \dagger
    \or 
      \ddagger
    \or 
      \mathsection
    \or 
      \mathparagraph
    \else 
      \@ctrerr  
    \fi
  }%
}   
\renewcommand*{\backref}[1]{}
\renewcommand*{\backrefalt}[4]{%
  \ifcase #1 %
No citations.
  \or
(cit. on p. #2).%
  \else
(cit on pp. #2).%
  \fi%
}
\newalphalph{\myfnsymbolmult}[mult]{\myfnsymbolsingle}{}
\newcommand{\M}{\mathbb{M}}
\DeclareMathOperator{\Spec}{Spec}
\DeclareMathOperator{\im}{im}
\DeclareMathOperator{\Hom}{Hom}
\DeclareMathOperator{\Ext}{Ext}
\DeclareMathOperator{\Tor}{Tor}
\newcommand{\floor}[1]{\lfloor #1 \rfloor}
\newtheorem{thm}{Theorem}[section]
\newtheorem{lem}[thm]{Lemma} 
\newtheorem{cor}[thm]{Corollary}
\newtheorem{exmp}[thm]{Example}
\newtheorem{prop}[thm]{Proposition} \theoremstyle{definition}
\newtheorem{rem}[thm]{Remark} 
\newtheorem{defn}[thm]{Definition}
\newtheorem{cond}{Condition}[section]
\Crefname{cor}{Corollary}{Corollaries}
\Crefname{conjecture}{Conjecture}{Conjectures}
\Crefname{rem}{Remark}{Remarks}
\Crefname{prop}{Proposition}{Propositions}  
\Crefname{question}{Question}{Questions}
\Crefname{figure}{Figure}{Figures}
\Crefname{cond}{Condition}{Conditions}
\title{Vanishing lines for modules over the motivic Steenrod algebra}
\author{Drew Heard}
\address{Fachbereich Mathematik der Universit{\"a}t Hamburg, Bundesstra{\ss}e 55, 20146 Hamburg, Germany}
\email{drew.heard@uni-hamburg.de}
\author{Achim Krause}
\address{Max-Planck-Institute for Mathematics, Vivatsgasse 7, 53111 Bonn, Germany}
\email{achim@mpim-bonn.mpg.de}
\date{\today}
\newcommand{\cA}{\mathcal{A}}
\DeclareMathOperator{\Sq}{Sq}
\keywords{cohomology of the Steenrod algebra, vanishing line, motivic homotopy theory.}
\subjclass{14F42,55S10}
\begin{document}

\begin{abstract}We study criteria for freeness and for the existence of a vanishing line for modules over certain Hopf subalgebras of the motivic Steenrod algebra over $\Spec \C$ at the prime 2. These turn out to be determined by the vanishing of certain Margolis homology groups in the quotient Hopf algebra $\cA/\tau$. 
\end{abstract}
\maketitle
\section{Introduction}
Let $\cA^{\text{cl}}$ be the classical mod $p$ Steenrod algebra. Criteria for freeness and for the existence of a vanishing line for modules over Hopf subalgebras of $\cA^{\text{cl}}$ have been obtained by Adams--Margolis \cite{am_sub,am_modules}, Moore--Peterson \cite{mp_modules}, Anderson--Davis \cite{ad_vanishing}, and Miller--Wilkerson \cite{mw_vanishing}, with special cases also considered by other authors. These results are of great importance; for example, they are used critically in the proof of the nilpotence and periodicity theorems of Devinatz, Hopkins, and Smith \cite{dhs_nilpotence,hs_periodicity}. 

Now let $\cA$ be the motivic Steenrod algebra. We work over $\Spec \C$ (although the results also hold more generally for algebraically closed fields,  see \Cref{rem:otherfields}) and at $p=2$, since the odd-primary motivic Steenrod algebra is a base-changed version of the classical odd-primary Steenrod algebra. By work of Voevodsky \cite{voe_1}, as an algebra, the dual $\cA_{\ast,\ast}$ of $\cA$ is given by
\[
\cal{A_{\ast,\ast}} \cong \M_2[\xi_{i+1},\tau_i \mid i \ge 0]/(\tau_i^2 + \tau \xi_{i+1}),
\]
where $\M_2 \cong \F_2[\tau]$ is the motivic cohomology of a point (see \Cref{sec:background} for more details on $\cA$ and $\cA_{\ast,\ast})$.
\begin{rem}
  We remind the reader that motivic objects are bigraded; they have internal degrees $q$ (the topological degree) and $w$ (the weight). However, the weight will not play a major role in this paper, and we will usually suppress it from the notation, considering motivic objects as singly graded. 
\end{rem}

 As shown in \cite{gir_picard} and \cite{gi_vanishing} the most naive adaptation of the classical results on freeness and vanishing lines cannot hold for $\cA$. Indeed, note the following, where we refer the reader to \cite[Ch.~19 and Ch.~20]{mar_spectra} for the definition of Margolis homology groups:
\begin{enumerate}
  \item Let $\cA(1)^{\text{cl}}$ be the Hopf subalgebra of the 2-primary classical Steenrod algebra $\cA^{\text{cl}}$ generated by $\Sq^1$ and $\Sq^2$, and $\cA(1)$ its motivic analog. Then an $\cA(1)^{\text{cl}}$-module $M$ is projective (equivalently, free, since $\cA(1)^{\text{cl}}$ is a connective Hopf algebra over a field, see \cite[Prop.~11.2.2]{mar_spectra}) if and only if the Margolis homology groups $H(M;Q_0)$ and $H(M;Q_1)$ vanish. In \cite[Ex.~4.6]{gir_picard} the authors give an example of an $\cA(1)$-module with vanishing $H(M;Q_0)$ and $H(M;Q_1)$ that is not projective. Furthermore, in Prop.~4.7 of \emph{loc.~cit.~}~they show that a third Margolis homology group must vanish, as well as requiring $M$ to be a free $\M_2$-module, in order for $M$ to be a projective $\cA(1)$-module (note that the latter condition is automatic classically, since we work over $\F_2$).
  \item  Let $\cA(0)^{\text{cl}}$ be the Hopf subalgebra of the 2-primary classical Steenrod algebra $\cA^{\text{cl}}$ generated by $\Sq^1$, and $\cA(0)$ its motivic analog. 
         Adams \cite[Prop.~2.5]{adams_periodicity} proves that if $M$ is a bounded-below $\cA^{\text{cl}}$-module which is free over $\cA(0)^{\text{cl}}$, then $\Ext^{p,q}_{\cA^{\text{cl}}}(M,\F_2)$ has a vanishing line of slope 1/2 
         (that is, when drawn in the $(q-p,p)$ plane, the $\Ext$ groups vanish above a line of slope 1/2). 
         Motivically, Guillou and Isaksen \cite[Prop.~7.2]{gi_vanishing} prove that if $N$ is a bounded-below $\cA$-module which is free over $\cA(0)$, then above a line of slope $1/2$, all the elements of $\Ext^{p,q}_{\cA}(N,\F_2)$ are $h_1$-local (here again, the slope refers to only the $(q-p,p)$ plane. They also observed that there is a vanishing line, but now of slope $1$; that is, $\Ext^{p,q}_{\cA}(N,\F_2) = 0$ whenever $p > ( q-p) + c$ for some constant $c$. 
\end{enumerate}

The goal of this paper is to explain results such as these. The key observation, in a sense already noted in \cite{gir_picard}, is that it is enough to work with the quotient Hopf algebra $\cA/\tau$. We shall see that $\cA/\tau$ is a (bigraded) 2-primary version of the classical odd-primary Steenrod algebra. This already demystifies the above results partially; for odd primes, freeness of an $\cA(1)^{\text{cl}}$-module does require the vanishing of three Margolis homology groups. We shall also see how to explain the vanishing line of slope 1, as well as the $h_1$-local part, see \Cref{ex:gi_vanishing}. In future work with Tobias Barthel we will use our main theorem to investigate periodicity and vanishing line results in motivic homotopy theory akin to this example. 

Classically Adams and Margolis \cite{am_sub} have shown that all Hopf subalgebras of the odd-primary $\cA^{\text{cl}}$ are the obvious ones; namely the duals of the quotients 
\[
\cA^{\text{cl}}_{\ast}/(\xi_1^{p^{h(1)}},\xi_2^{p^{h(2)}},\ldots,\tau_0^{2^{k(0)}},\tau_1^{2^{k(1)}},\ldots)
\]
where $0 \le h(i) \le \infty$ and $0 \le k(i) \le 1$, and the sequences $h = (h(1),h(2),\ldots)$ and $k=(k(1),k(2),\ldots)$ must satisfy certain conditions, see Condition 2.1 and 2.2 of \cite{am_sub}. The pair $(h,k)$ is called the profile function of the Hopf subalgebra. The first step in our work is to construct certain Hopf subalgebras of $\cA$ by a similar method. There is an added complexity arising from the fact that $\M_2$ is not a field. Using the notion of a free profile function $(h,k)$, originally introduced in the $C_2$-equivariant Steenrod algebra by Ricka \cite{ricka_steenrod}, we construct certain quotient Hopf algebras $B_{\ast,\ast}(h,k)$ that are free as $\M_2$-modules. In fact every quotient Hopf algebra of $\cA_{\ast,\ast}$ that is free as an $\M_2$-module is of this form. 
\newtheorem{thmx}{Theorem}
\renewcommand{\thethmx}{\Alph{thmx}} 
\begin{thmx}
  A quotient Hopf algebra of $\cA_{\ast,\ast}$ is free as an $\M_2$-module if and only if it is of the form $B_{\ast,\ast}(h,k)$ for a profile function satisfying certain numerical conditions.
\end{thmx}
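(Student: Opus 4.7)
The plan is to work on the dual side, where quotient Hopf algebras of $\cA$ correspond to quotients of $\cA_{\ast,\ast}$ by Hopf ideals, and to prove both directions of the equivalence separately.

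For the forward direction (construction), I would define $B_{\ast,\ast}(h,k) := \cA_{\ast,\ast}/I(h,k)$, where $I(h,k)$ is generated by $\xi_i^{2^{h(i)}}$ for $i\ge 1$ and $\tau_j^{2^{k(j)}}$ for $j\ge 0$. Two families of conditions on $(h,k)$ must be isolated: the conditions that guarantee $I(h,k)$ is a Hopf ideal (obtained by applying the coproduct formulas of $\cA_{\ast,\ast}$ to the generators, yielding inequalities between consecutive values of $h$ and $k$ in direct analogy with the classical Adams--Margolis conditions), and the conditions that guarantee $\M_2$-freeness of the quotient (which come from the defining relation $\tau_i^2 = \tau\xi_{i+1}$ and are essentially Ricka's ``free profile function'' conditions). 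Once the admissible $(h,k)$ are identified, $\M_2$-freeness can be exhibited by writing down an explicit monomial $\M_2$-basis.

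For the converse, I would reduce modulo $\tau$. If $Q$ is an $\M_2$-free quotient Hopf algebra of $\cA_{\ast,\ast}$, then $Q/\tau$ is a quotient Hopf algebra of $\cA_{\ast,\ast}/\tau$, which (as noted in the paper) is structurally a bigraded version of the dual of the classical odd-primary Steenrod algebra $\F_2[\xi_{i+1}]\otimes \Lambda[\tau_i]$. The Adams--Margolis classification for odd primes then produces a profile function $(h,k)$ with $Q/\tau \cong B_{\ast,\ast}(h,k)/\tau$. I would then lift this back to a comparison $Q \cong B_{\ast,\ast}(h,k)$ using the $\M_2$-freeness of both sides: a map of $\M_2$-free modules which is an isomorphism after reducing by the regular element $\tau$ is an isomorphism in each bidegree, so surjectivity at the level of $\cA_{\ast,\ast}$-quotients upgrades from $\cA_{\ast,\ast}/\tau$ to $\cA_{\ast,\ast}$ itself.

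The step I expect to be most delicate is pinning down the exact numerical conditions on $(h,k)$ and verifying that the same list governs both directions. The relation $\tau_i^2 = \tau\xi_{i+1}$ is the source of the subtlety: if $\tau_i^{2^{k(i)}}$ is killed, then iterating the relation shows that $\tau^{2^{k(i)-1}}\xi_{i+1}^{2^{k(i)-1}}$ lies in $I(h,k)$, and $\tau$-torsion-freeness of the quotient forces $\xi_{i+1}^{2^{k(i)-1}}$ itself to be in the ideal, tying $h(i+1)$ to $k(i)$. Reconciling this freeness inequality with the Hopf-ideal inequalities coming from the coproduct, and checking that the resulting combined list of conditions is exactly what the mod-$\tau$ Adams--Margolis classification produces after lifting, is the technical heart of the argument.
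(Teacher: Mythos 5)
Your forward direction is fine and corresponds to material the paper has anyway (the coproduct computation showing $I(h,k)$ is a Hopf ideal, and the explicit monomial basis exhibiting $\M_2$-freeness). The converse has a genuine gap, and it sits exactly where you pass from $\cA_{\ast,\ast}/\tau$ back to $\cA_{\ast,\ast}$. Knowing $Q/\tau$, equivalently the image $\bar I$ of the kernel $I$ in $\cA_{\ast,\ast}/\tau$, does not determine $I$, even though $Q$ is $\M_2$-free: the submodules $(e_1)$ and $(e_1+\tau e_2)$ of a free module $\M_2\{e_1,e_2\}$ both have $\M_2$-free quotient and the same reduction mod $\tau$. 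So from $\bar I = I(h,l)$ you can choose homogeneous lifts $g_i\equiv \xi_i^{2^{h(i)}}$ of the generators, and a graded Nakayama argument even shows such lifts generate $I$; but nothing formal guarantees the lifts can be taken to be the monomials themselves, i.e.\ that $\xi_i^{2^{h(i)}}$ and $\tau_j^{2^{k(j)}}$ actually lie in $I$ — and that is what you need to even write down the comparison map $B_{\ast,\ast}(h,k)\to Q$ that your five-lemma/Nakayama step is supposed to upgrade to an isomorphism. Ruling out such ``$\tau$-deformed'' kernels requires the coproduct (essentially redoing the Adams--Margolis induction motivically), not just $\tau$-torsion-freeness. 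Two smaller points: the mod-$\tau$ classification only yields $(h,k_\tau)$ with $k_\tau$ valued in $\{0,1\}$ (the $\tau_i$ are exterior in $\cA_{\ast,\ast}/\tau$), so the true $k$ — e.g.\ $k=(n+1,n,\dots)$ for $\cA(n)$ — must be reconstructed; your final paragraph's observation that torsion-freeness plus $\tau_i^2=\tau\xi_{i+1}$ forces $h(i+1)\le k(i)-1$, hence $k(i)=h(i+1)+1$ (or $0$, or $\infty$), does handle that, but it constrains only the numerical profile, not the ideal. Also, the mod-$\tau$ classification you invoke is not literally in Adams--Margolis (the gradings differ from the odd-primary case); the paper proves it separately by rerunning their induction.

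The paper avoids all of this by going the other way: $Q$ is $\M_2$-free exactly when $I$ equals the intersection of $\cA_{\ast,\ast}$ with the kernel of $\tau^{-1}\cA_{\ast,\ast}\to\tau^{-1}Q$, and since $\tau^{-1}\cA_{\ast,\ast}\cong\tau^{-1}\M_2\otimes_{\F_2}\cA^{\mathrm{cl}}_{\ast}$ is the classical $2$-primary dual Steenrod algebra over a graded field, the classical Adams--Margolis classification applies on the nose; pulling the classical generators $\xi_i^{2^{h_{\mathrm{cl}}(i)}}$ back through the embedding gives precisely the motivic monomials $\tau_{i-1}^{2^{h_{\mathrm{cl}}(i)}}$ and $\xi_i^{2^{h_{\mathrm{cl}}(i)-1}}$, so $I=I(h,k)$ with $k(i)=h_{\mathrm{cl}}(i+1)$, $h(i)=h_{\mathrm{cl}}(i)-1$, and freeness of $(h,k)$ is automatic. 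Inverting $\tau$ loses no information because $Q\hookrightarrow\tau^{-1}Q$ when $Q$ is torsion-free, whereas $Q\to Q/\tau$ does lose information — which is exactly why your reduction-mod-$\tau$ route needs the extra, non-formal lifting argument.
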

For a precise version of the theorem, see \Cref{thm:freeclassification}. 

In order to state our main result, let $P_t^s$ for $s,t>0$ be dual to $\xi_t^{2^{s-1}}$ and $Q_t$ be dual to $\tau_t$. We shall see that these also make sense in the quotient $\cA/\tau$, and there they satisfy $(P_t^s)^2 = 0$ for $s\leq t$ and $(Q_t)^2 = 0$, so we can define Margolis homology groups $H(M/\tau;Q_t)$ and $H(M/\tau;P_t^s)$ for an $\cA$-module $M$. For the following, compare the main theorem of \cite{mw_vanishing}. Note that we write $|P_t^s|$ for the topological degree. The following is given as \Cref{thm:mainthm}.

\begin{thmx}
    Let $B=B(h,k)$ be a Hopf subalgebra of $\cA$ which is free as an $\M_2$-module. Let $M$ be a $B$-module that is free and of finite type over $\M_2$. 
  \begin{enumerate}[(i)]
    \item If $H(M/\tau;P_t^s) =0$ for all $P_t^s \in B$ with $s\leq t$ and $H(M/\tau;Q_t) = 0$ for all $Q_t \in B$, then $M$ is $B$-free. 
    \item Let $d>0$ be an integer. If $H(M/\tau;P_t^s) =0$ for all $P_t^s \in B$ such that $s\leq t$ and $|P_t^s| < d$ and if $H(M/\tau;Q_t) = 0$ for all $Q_t$ such that $|Q_t| < d$, then $\Ext_B^{p,q}(M,\M_2) = 0$ for all $(p,q)$ such that $q < dp-c$ with $c$ depending only on $d$ and the connectivity of $M$. 
  \end{enumerate}
\end{thmx}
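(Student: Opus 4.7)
The strategy, signposted repeatedly in the introduction, is to reduce modulo $\tau$ and to apply the classical Adams--Margolis / Miller--Wilkerson theory to the quotient Hopf algebra $B/\tau := B\otimes_{\M_2}\F_2$. By Theorem A this is a sub-Hopf algebra of the bigraded $\F_2$-Hopf algebra $\cA/\tau$, which is structurally an analogue of the classical odd-primary Steenrod algebra. The key bridge is the change-of-rings identification
\[
\Ext_B^{*,*}(M,\F_2) \;\cong\; \Ext_{B/\tau}^{*,*}(M/\tau,\F_2),
\]
which holds because $M$ being $\M_2$-free ensures that any $B$-free resolution of $M$ reduces to a $B/\tau$-free resolution of $M/\tau$. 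Combined with the short exact sequence $0\to\M_2\xrightarrow{\tau}\M_2\to\F_2\to 0$, this produces a $\tau$-Bockstein long exact sequence
\[
\cdots \to \Ext_{B/\tau}^{p-1,*}(M/\tau,\F_2) \to \Ext_B^{p,*}(M,\M_2) \xrightarrow{\tau} \Ext_B^{p,*}(M,\M_2) \to \Ext_{B/\tau}^{p,*}(M/\tau,\F_2) \to \cdots
\]
relating $\Ext$ over $B$ to $\Ext$ over $B/\tau$.

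For part (i), the Margolis vanishing hypotheses are, by construction, precisely what the classical freeness criterion demands when applied inside $B/\tau$, with the $P_t^s$ for $s\leq t$ and the $Q_t$ playing the roles of the Milnor primitives just as at odd primes. Invoking that criterion yields that $M/\tau$ is free over $B/\tau$. Lifting a $B/\tau$-basis to elements of $M$ produces a $B$-linear map $F\to M$ from a free $B$-module $F$ that is an isomorphism modulo $\tau$. Because both $F$ and $M$ are $\M_2$-free, bounded below and of finite type in each bidegree, a bidegree-wise Nakayama argument in the $\tau$-adic filtration promotes this to a genuine isomorphism $F\xrightarrow{\sim}M$, establishing part (i).

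For part (ii), the same classical machinery, now with Margolis vanishing only for operations of degree $<d$, supplies a vanishing line
\[
\Ext_{B/\tau}^{p,q}(M/\tau,\F_2)=0 \qquad \text{for } q<dp-c',
\]
with $c'$ depending only on $d$ and the connectivity of $M$. Plugging this into the $\tau$-Bockstein sequence: in the forbidden half-plane both flanking $\Ext_{B/\tau}$-groups vanish, so $\tau$ acts as an isomorphism on $\Ext_B^{p,q}(M,\M_2)$ across all weights. Iterating this isomorphism along the weight direction and using that $\Ext_B^{p,q,w}(M,\M_2)$ vanishes for sufficiently extremal $w$ by connectivity and finite-type hypotheses, we conclude that $\Ext_B^{p,q}(M,\M_2)=0$ throughout the forbidden region, with the constant $c$ adjusted from $c'$ to absorb the weight-degree of $\tau$.

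The principal obstacle is running the classical Miller--Wilkerson induction cleanly inside $B/\tau$: it proceeds by dévissage through Hopf algebra extensions $B'\to B/\tau\to B''$ arising from the profile function, combined with the associated Cartan--Eilenberg spectral sequence, and for this to apply one needs the structure theory of sub-Hopf algebras of $\cA/\tau$ (implicit in Theorem A) to mirror that of the classical odd-primary Steenrod algebra at the level of such extensions. A secondary nuisance is the careful bookkeeping of the bigrading in part (ii), which must certify that the final constant $c$ depends only on $d$ and the connectivity of $M$ and not on its weight; this requires tracking the weight-shift introduced by $\tau$ through both the change-of-rings identification and the iteration step.
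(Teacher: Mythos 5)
Your argument is correct and follows the same skeleton as the paper: reduce mod $\tau$, invoke the Miller--Wilkerson-style criterion for Hopf subalgebras of $\cA/\tau$ (the paper's \Cref{thm:amodtmargolis}, whose proof indeed requires the Adams--Margolis-type structure theory for $\cA/\tau$ that you flag as the main obstacle), and for part (i) lift a $B/\tau$-basis and promote the mod-$\tau$ isomorphism $F/\tau \to M/\tau$ to an isomorphism $F \to M$ using $\M_2$-freeness and finite type --- this is exactly the paper's \Cref{lem:finite}, including the change-of-rings isomorphism $\Ext_B(M,N)\cong\Ext_{B/\tau}(M/\tau,N)$ you use later. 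The one place you genuinely diverge is part (ii): the paper runs the $\tau$-Bockstein \emph{spectral sequence} for the filtration of $B$ by powers of $\tau$, with $E_1 = \Ext_{B/\tau}(M/\tau,\F_2)[\tau]$, and argues that since $\tau$ has internal degree $0$ the vanishing line is present on $E_1$ and cannot be destroyed by differentials; you instead use the single long exact sequence coming from $0\to\M_2\xr{\tau}\M_2\to\F_2\to 0$, deduce that $\tau$ acts invertibly on $\Ext_B^{p,q,\ast}(M,\M_2)$ in the forbidden region, and kill the groups by weight-boundedness --- which is precisely the mechanism of the paper's \Cref{lem:marcomparasion}, so your route trades the convergence discussion of the spectral sequence for an explicit boundedness argument; both are valid. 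Two small corrections to your bookkeeping: the flanking term $\Ext_{B/\tau}^{p-1,q}(M/\tau,\F_2)$ only vanishes for $q < d(p-1)-c'$, so your constant must be enlarged by $d$ (harmless, since it still depends only on $d$ and the connectivity), and the needed adjustment comes from this homological shift $p\mapsto p-1$, not from "the weight-degree of $\tau$" --- $\tau$ has topological degree $0$, which is exactly why no adjustment in $q$ is needed on that account.
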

We note that if $M$ is a $\cA$-module, which is free and of finite type over $\M_2$, such that $x\in \cA$ has the property that $x^2$ acts trivially on $M$, then $H(M;x) = 0$ if and only if $H(M/\tau;x) = 0$  see \Cref{lem:marcomparasion}. 

This paper is organized as follows: we first give an introduction to the motivic Steenrod algebra. We then construct certain Hopf subalgebras of $\cA$. The proof of our main theorem is carried out in \Cref{sec:proof}, and relies on a reduction to $\cA/\tau$, which is studied in more detail in \Cref{sec:amodtau}. 
\section*{Acknowledgments}
The first author is grateful to the Max Planck Institute for Mathematics and the Universit{\"a}t Hamburg for hospitality, and to the Max Planck Institute and DFG Schwerpunktprogramm SPP 1786 for financial support. The second author is grateful to the Max Planck Institute for Mathematics and the International Max Planck Research School for hospitality and financial support. The authors thank Dan Isaksen and the anonymous reviewer for helpful comments on earlier versions of this document. 
\section{Background}\label{sec:background}
Let $\mathcal{SH}(\C)$ be the homotopy category of motivic spectra over $\Spec(\C)$, and let $p=2$. We denote by $\M_2 \cong \F_2[\tau]$ the motivic cohomology of a point, where $|\tau|$ has bidegree $(0,1)$. Let $\cal{A}$ denote the motivic Steenrod algebra, and $\cal{A}_{\ast,\ast} = \Hom_{\M_2}(\cA,\M_2)$ its dual. The properties of $\cA$ and $\cA_{\ast,\ast}$ have been determined by Voevodsky in \cite{voe_1,voe_2},   We give a summary of what we need here - see in particular \cite[Prop.~10.2, Thm.~12.6, and Lem.~12.11]{voe_1}, with some typos in the Adem relations corrected in \cite[Thm.~4.5.1]{1207.3121}. In order to state these results, we note that a sequence $I = (i_1,\ldots,i_k)$ of integers is called admissible if either $I = \emptyset$ or $k \ge 1,i_k \ge 1$ and $i_{j-1} \ge 2 i_j$ for all $k \ge j \ge 2$. 
\begin{thm}[Voevodsky]\label{thm:voev}
  \begin{enumerate}[(i)]
\item The motivic Steenrod algebra $\cA$ is the associative algebra over $\M_2$ generated by $\Sq^{2i} \in (2i,i)$ and $\Sq^{2i-1} \in (2i-1,i-1)$ for $i \ge 1$ subject to the following relations, where $0 < a < 2b$:
\begin{equation}\label{eq:motadem}
\Sq^{a}\Sq^{b} = \sum_{j=0}^{\floor{a/2}}\tau^{e_i}\binom{b-1-j}{a-2j}\Sq^{a+b-j}\Sq^{j}
\end{equation}
where
\[
e_i = \begin{cases}
  1 & \text{ if } a,b \text{ even, and } j \text{ odd} \\
  0 & \text{ else.}
\end{cases}
\]
\item   The motivic Steenrod algebra $\cA$ is a free $\M_2$-module on the admissible monomials $\Sq^I = \Sq^{i_1}\ldots\Sq^{i_k}$.
\item The dual motivic Steenrod algebra $\cA_{\ast,\ast}= \Hom_{\M_2}(\cA,\M_2)$ is given as an algebra by 
\[
\cal{A_{\ast,\ast}} \cong \M_2[\tau_i,\xi_{i+1} \mid i \ge 0]/(\tau_i^2 + \tau \xi_{i+1}),
\]
where the bigrading is given by $|\xi_i| = (2^{i+1}-2,2^i-1)$, and $|\tau_i| = (2^{i+1}-1,2^i-1)$.
\item The coproduct in $\cA_{\ast,\ast}$ is given by 
\[
\begin{split}
\psi_*(\xi_k) &= \sum_{i=0}^k \xi_{k-i}^{2^i} \otimes \xi_i \\
\psi_*(\tau_k) &= \sum_{i=0}^k \xi_{k-i}^{2^i} \otimes \tau_i + \tau_k \otimes 1. 
\end{split}
\]
\end{enumerate}
\end{thm}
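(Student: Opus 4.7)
The plan is to follow Voevodsky's approach from \cite{voe_1,voe_2}, which parallels the classical Serre--Cartan construction of $\cA^{\text{cl}}$ but works in motivic cohomology. First I would construct the motivic Steenrod squares $\Sq^{2i}$ and $\Sq^{2i-1}$ as cohomology operations via a motivic version of the extended power construction. Classically $\Sq^i$ arises from the $\Sigma_2$-equivariant diagonal $X \to X^{\wedge 2}$ by pulling back classes on $E\Sigma_2 \times_{\Sigma_2} X^{\wedge 2}$; motivically one replaces $E\Sigma_2$ by a simplicial motivic scheme modeling the Borel construction. The bidegrees $(2i,i)$ for $\Sq^{2i}$ and $(2i-1,i-1)$ for $\Sq^{2i-1}$ then drop out of tracking how the weight interacts with this construction.

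For part (i), I would derive the Adem relations by evaluating $\Sq^a\Sq^b$ on the fundamental classes of motivic classifying spaces $B\mu_2$ and $(B\mu_2)^n$. Over $\Spec\C$, one has $H^{\ast,\ast}(B\mu_2;\F_2) \cong \M_2[u] \otimes \Lambda(v)$ in the appropriate bidegrees, with a Bockstein-type relation between $u$ and $v$. Expanding both sides of the proposed Adem relation against a general monomial in $u$ and $v$, and reading off coefficients, yields the stated formula. The extra factor $\tau^{e_i}$ arises precisely when the classical monomial sits in a weight that is bridged by the relation $v^2 = \tau u$ in $H^{\ast,\ast}(B\mu_2)$; this is the central novel ingredient compared with the classical case.

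For part (ii), the span statement is a straightforward induction using the Adem relations to rewrite any monomial $\Sq^{i_1}\cdots\Sq^{i_k}$ as an $\M_2$-linear combination of admissible monomials. Linear independence is then established by showing that distinct admissibles act differently on the motivic cohomology of $(B\mu_2)^n$ for sufficiently large $n$, reducing to a counting argument in the motivic cohomology of classifying spaces. For parts (iii) and (iv), with the admissible basis in hand, the dual $\cA_{\ast,\ast}$ is computed by identifying $\xi_i, \tau_i$ as the $\M_2$-duals of specific admissible monomials; the relation $\tau_i^2 = \tau\xi_{i+1}$ is forced by the corresponding relation $v^2 = \tau u$ in $H^{\ast,\ast}(B\mu_2;\F_2)$, and the coproduct formulas are extracted from the Cartan-type formula for the action of $\cA$ on smash products, evaluated on $(B\mu_2)^n$.

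The main obstacle is the computation in the motivic cohomology of motivic classifying spaces of symmetric groups and of products of $B\mu_2$: these are simplicial motivic schemes whose cohomology is more subtle than its topological counterpart, and verifying that the resulting operations genuinely satisfy the stated Adem relations with the correct $\tau$-corrections requires a careful analysis of how weights interact under composition. This is the technical heart of Voevodsky's argument, and the step where the non-field nature of $\M_2$ — which motivates the present paper's interest in freeness questions — first exhibits itself nontrivially.
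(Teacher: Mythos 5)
This theorem is not proved in the paper at all: it is imported as an external input, with the text pointing to \cite[Prop.~10.2, Thm.~12.6, Lem.~12.11]{voe_1} and to \cite[Thm.~4.5.1]{1207.3121} for the corrected form of the Adem relations. So there is no internal argument to compare yours against; the only question is whether your sketch would amount to an independent proof. As a road map it does track Voevodsky's actual strategy — construct the operations via motivic extended powers and classifying spaces, establish Cartan/Adem relations by evaluating on products of $B\mu_2$, deduce the admissible-monomial basis, and dualize \`a la Milnor to get (iii) and (iv).

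As a proof, however, it has a genuine gap: every hard step is deferred rather than carried out. The construction of the total power operation, the computation of $H^{\ast,\ast}$ of $B\mu_2$ and of the classifying spaces of symmetric groups (which rests on Voevodsky's Thom class and Euler class computations and the relevant cellularity statements), the proof that the resulting operations are bistable and exhaust all bistable operations, and the linear-independence half of (ii) — which over the non-field $\M_2$ needs an actual freeness argument, not just a counting argument — are exactly the content of \cite{voe_1,voe_2}, and your proposal names them as ``the technical heart'' without resolving them. There is also a small inaccuracy worth flagging: over $\Spec \C$ one has $H^{\ast,\ast}(B\mu_2;\F_2)\cong \M_2[u,v]/(v^2-\tau u)$ with $|v|=(1,1)$ and $|u|=(2,1)$; this is free over $\M_2$ on $\{u^i, vu^i\}$ but is not the ring $\M_2[u]\otimes\Lambda(v)$, since $v^2=\tau u$ is incompatible with $v^2=0$ — and that relation is precisely the source of the $\tau^{e_i}$ factors in \eqref{eq:motadem} and of the relation $\tau_i^2=\tau\xi_{i+1}$ in (iii). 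In short: your outline has the right shape, but for the purposes of this paper the statement is a citation, and your sketch does not constitute a self-contained proof of it.
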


\begin{rem}\label{rem:tau}
\begin{enumerate}[(i)]
 \item $\Sq^1$ can be identified with the Bockstein $\beta$ associated to the short exact sequence
\[
0 \to \Z/2 \to \Z/4 \to \Z/2 \to 0. 
\]
Note that the Adem relations show that $\Sq^{2i+1} = \beta\Sq^{2i}$ for all $i \ge 0$, where $\Sq^0=1$.
\item Note that $\cA_{\ast,\ast} = \Hom_{\M_2}(\cA,\M_2)$ is given the homological grading, so $\tau$ acts by bidegree $(0,-1)$ on it. 
\end{enumerate}
\end{rem}

From the description of the dual, we immediately see:
\begin{lem}\label{lem:haequivalence}
After inverting $\tau$, there is an isomorphism of Hopf algebras $\tau^{-1}\cA_{*,*} \cong \tau^{-1}\M_2\otimes_{\F_2}\cA^{\mathrm{cl}}_{*}$, where $\cA^{\mathrm{cl}}_{*}$ is the classical dual Steenrod algebra. The isomorphism is given by 
\begin{align*}
\tau_k &\mapsto \tau^{1-2^k}\xi_{k+1} \\
\xi_k &\mapsto \tau^{1-2^k}\xi_k^2
\end{align*}
Dually, there is an isomorphism $\tau^{-1}\cA \cong \tau^{-1}\M_2\otimes_{\F_2}\cA^{\mathrm{cl}}$.
\end{lem}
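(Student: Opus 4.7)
The plan is a direct check on generators. I would first define an $\M_2$-algebra map
\[
\phi\colon \cA_{*,*} \longrightarrow \tau^{-1}\M_2 \otimes_{\F_2} \cA^{\mathrm{cl}}_*
\]
by the stated formulas, writing $\xi_k^{\mathrm{cl}}$ for the classical generators to distinguish them from the motivic $\xi_k$. By \Cref{thm:voev}(iii), the only relations to verify are $\tau_i^2 = \tau\xi_{i+1}$, which follows from
\[
\phi(\tau_i)^2 = \tau^{2-2^{i+1}}(\xi_{i+1}^{\mathrm{cl}})^2 = \tau\cdot\tau^{1-2^{i+1}}(\xi_{i+1}^{\mathrm{cl}})^2 = \phi(\tau\xi_{i+1}).
\]
Since the target already has $\tau$ inverted, $\phi$ extends uniquely to $\tau^{-1}\cA_{*,*}$.

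For bijectivity I would write down an explicit inverse $\psi$. Solving the formula $\phi(\tau_{k-1}) = \tau^{1-2^{k-1}}\xi_k^{\mathrm{cl}}$ for $\xi_k^{\mathrm{cl}}$ suggests defining $\psi(\xi_k^{\mathrm{cl}}) = \tau^{2^{k-1}-1}\tau_{k-1}$; since $\cA^{\mathrm{cl}}_*$ is polynomial on the $\xi_k^{\mathrm{cl}}$, this gives a well-defined $\tau^{-1}\M_2$-algebra map. A short computation using $\tau_{k-1}^2 = \tau\xi_k$ then verifies $\psi\phi = \mathrm{id}$ and $\phi\psi = \mathrm{id}$ on generators, so $\phi$ is an isomorphism of $\tau^{-1}\M_2$-algebras.

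The last step is coproduct-compatibility, verified using \Cref{thm:voev}(iv) and the classical coproduct. For $\xi_k$, the classical formula $\Delta(\xi_k^{\mathrm{cl}}) = \sum_i(\xi_{k-i}^{\mathrm{cl}})^{2^i}\otimes\xi_i^{\mathrm{cl}}$, squared term-by-term (no cross terms in characteristic $2$), matches $(\phi\otimes\phi)\Delta(\xi_k)$ once the scalars $\tau^{(1-2^{k-i})\cdot 2^i}$ in the left factor and $\tau^{1-2^i}$ in the right are combined across $\otimes_{\tau^{-1}\M_2}$ to give the common factor $\tau^{1-2^k}$. The $\tau_k$ case is parallel, with the extra summand $\tau_k\otimes 1$ supplying the $j=0$ term of $\Delta(\xi_{k+1}^{\mathrm{cl}})$. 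Taking bidegree-wise $\M_2$-linear duals (unproblematic since $\cA$ is free over $\M_2$) then yields the statement for $\tau^{-1}\cA$. The only real delicate point in the whole argument is tracking how $\tau$-scalars move across $\otimes_{\tau^{-1}\M_2}$; beyond that, everything is routine polynomial bookkeeping.
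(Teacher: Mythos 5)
Your proposal is correct and matches the paper's approach: the paper treats this lemma as immediate from Voevodsky's presentation of $\cA_{*,*}$ (eliminate $\xi_{i+1}=\tau^{-1}\tau_i^2$ after inverting $\tau$), and your argument is simply the explicit verification of that observation, with the generator/relation check, the inverse, and the coproduct bookkeeping all carried out correctly.
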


\begin{defn}
We will refer to the embedding $\cA_{*,*}\to \tau^{-1}\M_2\otimes_{\F_2}\cA^{\mathrm{cl}}_{*}$ as the embedding into the classical dual Steenrod algebra, since its codomain can be thought of as the classical dual Steenrod algebra over the graded field $\tau^{-1}\M_2$. Similarly, we refer to the embedding $\cA\to \tau^{-1}\M_2\otimes_{\F_2}\cA^{\mathrm{cl}}$ as the embedding into the classical Steenrod algebra.
\end{defn}

The following is straightforward and can be proved in the same way as the classical odd-primary case.
\begin{lem}\label{lem:dualfree}
  Let $E=(e_0,e_1,\ldots)$ be a sequence of ones and zeroes, almost all zero, and $R=(r_1,r_2,\ldots)$ a sequence of nonnegative integers, almost all zero. As an $\M_2$-module, $\cA_{\ast,\ast}$ is free with basis 
  \[
\cal{B}_m \coloneqq \{ \Pi_{i \ge 0} \tau_i^{e_i} \Pi_{j \ge 1} \xi_j^{r_j}\}.
  \]
\end{lem}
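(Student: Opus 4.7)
The plan is to mirror the classical odd-primary proof, modified to account for the relation $\tau_i^2 = \tau\xi_{i+1}$ (as opposed to $\tau_i^2 = 0$). I would first note, using \Cref{thm:voev}(ii), that $\cA$ is free of finite rank over $\M_2$ in each bidegree, so its $\M_2$-dual $\cA_{\ast,\ast}$ is likewise free of finite rank in each bidegree; hence it suffices to show $\cal{B}_m$ is a basis.

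Let $V$ denote the free $\M_2$-module on $\cal{B}_m$ and let $\phi\colon V\to\cA_{\ast,\ast}$ be the obvious map. To show surjectivity I would invoke \Cref{thm:voev}(iii): any monomial $\prod_i\tau_i^{a_i}\prod_j\xi_j^{b_j}$ in the generators of $\cA_{\ast,\ast}$ can be rewritten, using $\tau_i^2 = \tau\xi_{i+1}$ to reduce $\tau_i$-exponents modulo $2$, as $\tau^n$ times an element of $\cal{B}_m$ for some $n\ge 0$. For injectivity I would reduce modulo $\tau$: the defining relations collapse to $\tau_i^2 = 0$, so $\cA_{\ast,\ast}/\tau \cong \F_2[\xi_j \mid j\ge 1]\otimes_{\F_2}\Lambda(\tau_i \mid i\ge 0)$ is manifestly free over $\F_2$ on the images of $\cal{B}_m$. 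Since $\cA_{\ast,\ast}$ is $\M_2$-flat, the sequence $0\to\ker\phi\to V\to\cA_{\ast,\ast}\to 0$ remains exact modulo $\tau$, and the resulting isomorphism $V/\tau\cong\cA_{\ast,\ast}/\tau$ forces $(\ker\phi)/\tau = 0$.

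The remaining step is to deduce $\ker\phi = 0$. I would observe that $\ker\phi$ is a graded, $\tau$-torsion-free (as a submodule of the free module $V$), $\tau$-divisible $\M_2$-submodule. Since each generator $\tau_i$, $\xi_j$ has weight at most its topological degree and $\tau$ acts by bidegree $(0,-1)$, weights of nonzero elements in $V$ are bounded above for each fixed topological degree, so repeated division by $\tau$ must terminate. I expect this $\tau$-divisibility bookkeeping to be the only point requiring care; everything else reduces to $\cA_{\ast,\ast}$ being free over $\M_2$ in each bidegree by \Cref{thm:voev}(ii).
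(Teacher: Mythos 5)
Your argument is correct. Note that the paper does not actually write out a proof here: it simply asserts that the statement "can be proved in the same way as the classical odd-primary case," i.e.\ by the direct monomial argument (spanning via the rewriting rule $\tau_i^2=\tau\xi_{i+1}$, independence by the obvious counting once one knows the $\M_2$-rank of $\cA_{\ast,\ast}$ in each degree, e.g.\ from \Cref{thm:voev}(ii) or after inverting $\tau$ as in \Cref{lem:haequivalence}). Your spanning step is exactly this classical rewriting, but you handle linear independence differently: you use \Cref{thm:voev}(ii) plus degreewise duality to get that $\cA_{\ast,\ast}$ is $\M_2$-free (hence $\tau$-torsion-free), reduce the surjection $V\to\cA_{\ast,\ast}$ modulo $\tau$, identify $\cA_{\ast,\ast}/\tau$ with $\F_2[\xi_j]\otimes\Lambda(\tau_i)$ directly from the presentation in \Cref{thm:voev}(iii), and then kill the resulting $\tau$-divisible kernel by the weight bound (every nonzero homogeneous element of $V$ in topological degree $q$ has weight at most $q$, and $\tau$ strictly lowers weight). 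This is a legitimate and self-contained way to deal with the base ring $\M_2$ not being a field; its only costs are the flatness/exactness bookkeeping and the (correct, but worth stating carefully) facts that the kernel is a bigraded submodule and that the graded dual of a degreewise finitely generated free module is again free. A marginally shorter variant of your ending would note that, in each bidegree, $V$ and $\cA_{\ast,\ast}$ are finite-dimensional over $\F_2$ and a surjection which is an isomorphism mod $\tau$ between $\tau$-torsion-free modules of this kind is an isomorphism; but your divisibility-plus-weight argument achieves the same thing.
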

\begin{defn}
  Let $\rho(E,R)$ be the dual monomial basis of $\cA$. Define
\[
P^R = \rho(0,R)
\]
and
\[
Q(E) = \rho(E,0).
\]
Additionally let $Q_i$ be dual to $\tau_i$, i.e., 
\[
Q_i = Q(0,\ldots,1,\ldots),
\]
where the 1 is in the $i$-th spot. Finally, for $s>1$, define $P_t^s = P(0,\ldots,2^{s-1},\ldots)$, where the $2^{s-1}$ is in the $t$-th position, to be the class dual to $\xi_t^{2^{s-1}}$ (note that the indexing on $Q$ starts at position 0, and that on $P$ starts at position 1).
\end{defn}

 The indexing is chosen such that the motivic $P_s^t$ maps to the classical one under the natural isomorphism $\F_2 \otimes_{\M_2} \cA_{\ast,\ast} \simeq \cA_{\ast}$. More precisely, we have:
 \begin{lem}
Under the embedding $\cA \hookrightarrow \tau^{-1}\M_2\otimes_{\F_2}\cA^{\mathrm{cl}}$ into the classical Steenrod algebra, $\rho(E,R)$ goes to a $\tau$-multiple of the classical Milnor basis element $P^{E+2R}$. 
 \end{lem}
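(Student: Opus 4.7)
The plan is a direct computation through the isomorphism $\psi \colon \tau^{-1}\cA_{\ast,\ast} \xrightarrow{\cong} \tau^{-1}\M_2\otimes_{\F_2} \cA^{\mathrm{cl}}_{\ast}$ of \Cref{lem:haequivalence}, followed by dualization. First I would apply $\psi$ to the dual basis element
\[
\omega(E,R) \coloneqq \prod_{i\ge 0}\tau_i^{e_i}\prod_{j\ge 1}\xi_j^{r_j}
\]
which is $\M_2$-linearly dual to $\rho(E,R)$. Using multiplicativity of $\psi$ together with $\tau_i\mapsto \tau^{1-2^i}\xi_{i+1}^{\mathrm{cl}}$ and $\xi_j\mapsto \tau^{1-2^j}(\xi_j^{\mathrm{cl}})^{2}$, one obtains
\[
\psi\bigl(\omega(E,R)\bigr) \;=\; \tau^{c(E,R)}\,\prod_{k\ge 1}(\xi_k^{\mathrm{cl}})^{e_{k-1}+2r_k}\;=\;\tau^{c(E,R)}\,m_{E+2R},
\]
where $c(E,R)=\sum_{i\ge 0} e_i(1-2^i)+\sum_{j\ge 1} r_j(1-2^j)\le 0$ and $m_S\coloneqq \prod_k(\xi_k^{\mathrm{cl}})^{s_k}$ denotes the $\F_2$-monomial basis of $\cA^{\mathrm{cl}}_{\ast}$.

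Next I would observe that $(E,R)\mapsto E+2R$ is a bijection onto classical exponent sequences: the parities of the entries of $E+2R$ recover $E$, and then $R$ follows. Consequently $\psi$ carries the $\M_2$-basis $\{\omega(E,R)\}$ of $\cA_{\ast,\ast}$ bijectively onto the family $\{\tau^{c(E,R)}m_{E+2R}\}$, which up to the explicit $\tau$-powers is precisely the classical Milnor $\F_2$-basis of $\cA^{\mathrm{cl}}_{\ast}$ sitting inside $\tau^{-1}\M_2\otimes \cA^{\mathrm{cl}}_{\ast}$.

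Finally I would dualize. Since $\rho(E,R)$ is characterized as the $\M_2$-linear functional dual to $\omega(E,R)$, and $P^{S}\in \cA^{\mathrm{cl}}$ is the $\F_2$-linear functional dual to $m_S$, evaluating the candidate $\tau^{-c(E,R)}\cdot (1\otimes P^{E+2R})$ on $\psi\bigl(\omega(E',R')\bigr)=\tau^{c(E',R')}m_{E'+2R'}$ yields
\[
\tau^{c(E',R')-c(E,R)}\,\delta_{E+2R,\,E'+2R'}\;=\;\delta_{(E,R),(E',R')}
\]
by the bijection above. This identifies the image of $\rho(E,R)$ under $\cA\hookrightarrow \tau^{-1}\M_2\otimes \cA^{\mathrm{cl}}$ as $\tau^{-c(E,R)}P^{E+2R}$, which is the desired $\tau$-multiple of $P^{E+2R}$; note $-c(E,R)\ge 0$, so this does land in $\cA\subseteq \tau^{-1}\M_2\otimes \cA^{\mathrm{cl}}$ as required.

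There is no real obstacle beyond bookkeeping: the motivic $\xi_j$ contributes doubled exponents to $\xi_j^{\mathrm{cl}}$ while $\tau_i$ shifts by one index to contribute a single power of $\xi_{i+1}^{\mathrm{cl}}$, and one has to check that the resulting classical exponent sequence agrees with the natural meaning of $E+2R$ as $(e_{k-1}+2r_k)_{k\ge 1}$.
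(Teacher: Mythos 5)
Your proposal is correct and is essentially the paper's argument: the paper also proves the lemma by dualizing and noting that monomials in the motivic dual Steenrod algebra map to $\tau$-multiples of the corresponding classical monomials under the embedding of \Cref{lem:haequivalence}, which is exactly your computation of $\psi(\omega(E,R))=\tau^{c(E,R)}m_{E+2R}$. You have simply written out the bookkeeping (the exponent $c(E,R)$, the bijection $(E,R)\mapsto E+2R$, and the dual pairing) that the paper leaves implicit as ``immediate from the definition of the embedding.''
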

\begin{proof}
Dually, this corresponds to the fact that monomials in the motivic dual Steenrod algebra go to $\tau$-multiples of the corresponding monomials in the classical dual Steenrod algebra, which is immediate from the definition of the embedding.
\end{proof}
\begin{rem}
This enables us to recover multiplicative relations between the $\rho(E,R)$ from the corresponding relations between the $P^{E+2R}$, since the powers of $\tau$ that occur are determined by weight. For example, the $\C$-motivic Adem relations can be deduced from this. 

Furthermore, one can also use this to deduce that the classical recursion formula for the $Q_i$ given by
\[
Q_{n+1} = [Q_n,\Sq^{2^{n+1}}],
\]
also holds in the motivic Steenrod algebra. This also appears in \cite[Eq.~(2)]{Kylling} over more general base fields (in general, the result requires a correction term when $\sqrt{-1} \not \in k$).

\end{rem}

\begin{lem}
  We have $(Q_i)^2 = 0$ and $(P^s_t)^2 = 0$ whenever $s<t$. 
\end{lem}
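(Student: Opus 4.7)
The plan is to reduce the two vanishing identities to classical statements in $\cA^{\mathrm{cl}}$ via the embedding $\cA \hookrightarrow \tau^{-1}\cA \cong \tau^{-1}\M_2 \otimes_{\F_2}\cA^{\mathrm{cl}}$ from \Cref{lem:haequivalence}. By \Cref{thm:voev}, $\cA$ is free over $\M_2$ and hence $\tau$-torsion-free, so this embedding is injective: an identity holds in $\cA$ if and only if it holds after inverting $\tau$.

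I would then invoke the preceding lemma, which identifies $\rho(E,R)$ with a $\tau$-power multiple of the classical Milnor basis element $P^{E+2R}$. Specialised to $Q_i$ (take $R=0$ and $E$ concentrated at position $i$) and to $P_t^s$ (take $E=0$ and $R$ equal to $2^{s-1}$ at position $t$), this identifies $Q_i$ with a $\tau$-multiple of the classical Milnor primitive $Q_i^{\mathrm{cl}}$ dual to $\xi_{i+1}$, and $P_t^s$ with a $\tau$-multiple of the classical Milnor element dual to $\xi_t^{2^s}$. Squaring only doubles the $\tau$-power, so both assertions reduce to the classical facts that $Q_i^{\mathrm{cl}}$ squares to zero in $\cA^{\mathrm{cl}}$ and that the classical Milnor element dual to $\xi_t^{2^s}$ squares to zero whenever $s<t$.

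Both classical identities are standard consequences of Milnor's product formula. The first is immediate since $Q_i^{\mathrm{cl}}$ is a Milnor primitive of odd classical degree squared to zero mod $2$. For the second, Milnor's formula applied to $R=S=(0,\ldots,2^s,\ldots)$ with $2^s$ in position $t$ leaves only the three entries $x_{t,0}, x_{0,t}, x_{t,t}$ potentially nonzero; the hypothesis $s<t$ forces $x_{t,t}=0$ (since the row constraint $x_{t,0}+2^t x_{t,t}=2^s$ has no positive-integer solution for $x_{t,t}$), leaving a unique candidate with $x_{t,0}=x_{0,t}=2^s$. Its contribution carries multinomial coefficient $\binom{2^{s+1}}{2^s,2^s}$, which vanishes mod $2$ by Lucas's theorem. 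The only real bookkeeping is verifying that the $\tau$-exponents on the two sides line up properly, and this is automatic once one appeals to the preceding lemma and to the fact that squaring preserves $\tau$-multiplication.
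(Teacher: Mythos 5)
Your proof is correct and follows essentially the same route as the paper: reduce to the classical Steenrod algebra via the $\tau$-inverted embedding of \Cref{lem:haequivalence} together with the identification of $\rho(E,R)$ with a $\tau$-multiple of the classical Milnor basis element, and then invoke the standard classical relations $(Q_i^{\mathrm{cl}})^2=0$ and $(P_t^s)^2=0$ for $s<t$, which you verify by Milnor's product formula. The only cosmetic quibble is that ``odd-degree primitive'' alone does not force a square to vanish at $p=2$ (one needs, e.g., that the square of a primitive is again primitive and that the primitives of $\cA^{\mathrm{cl}}$ sit in odd degrees, or just the same Milnor-matrix computation you carried out for $P_t^s$); the classical fact itself is of course standard.
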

\begin{proof}
This also follows immediately from the corresponding relations in the classical Steenrod algebra.
\end{proof}
\begin{rem}
For $s=t$, $(P_t^s)^2$ is nonzero. For example, at $p=2$, $P_1^1=\Sq^2$, and $\Sq^2\Sq^2 = \tau \Sq^3 \Sq^1$ (note that non-motivically, we have $(P_1^1)^2 = \Sq^3 \Sq^1$). Note though that $(P^1_1)^2 \equiv 0 \mod (\tau)$; we shall see in \Cref{sec:amodtau} that this type of result holds for all $P_t^s$ with $s=t$.
\end{rem}

\section{Some quotient Hopf algebras of $\cA_{\ast,\ast}$}
In this section, we introduce certain quotient Hopf algebras of $\cA_{\ast,\ast}$, and identify conditions that ensure that they are free as $\M_2$-modules. The results are similar to the classification of quotient Hopf algebras of the classical odd-primary Steenrod algebra, although of course classically we do not need to worry about freeness. 

To begin, let $h$ be a function from the set $\{1,2,3,\ldots\}$ to the set $\{0,1,2,\ldots,\infty\}$, and let $k$ be a function from the set $\{0,1,2,\ldots\}$ to the set $\{0,1,2,\ldots,\infty\}$. We call the pair $(h,k)$ a \emph{profile function}. 
\begin{defn}
  The quotient algebra $B_{\ast,\ast}(h,k)$ is the quotient of $\cA_{\ast,\ast}$ by the relations
  \[
\begin{split}
\xi_i^{2^{h(i)}} &= 0 \quad (i=1,2,3,\ldots) \\
\tau_j^{2^{k(j)}} &= 0 \quad (j = 0,1,2,\ldots),
\end{split}
  \]
  with the convention that if $h(i)$ or $k(j) = \infty$, then we impose no relation. We let $I(h,k)$ denote the ideal of $\cA_{\ast,\ast}$ generated by these relations. 
\end{defn}

We point out two interesting properties in contrast to the classical Steenrod algebra: Firstly, $B_{\ast,\ast,}(h,k)$ need not be free as an $\M_2$ module, and, secondly, the relation $\tau_i^2 = \tau \xi_{i+1}$ in $\cA_{\ast,\ast}$ implies that two sequences $(h,i)$ and $(h',i')$ can give rise to isomorphic quotient algebras of $\cA_{\ast,\ast}$. 

We deal with the latter issue first, by defining a partial order on profile functions by $(h,k) \le (h',k')$ if we have $h(n+1) \le h'(n+1)$ and $k(n) \le k'(n)$ for all $n \ge 0$. This leads to the following definition, originally due to Ricka \cite[Def.~5.9]{ricka_steenrod}.
\begin{defn}
  A profile function is minimal if it is minimal among profile functions $(h',k')$ such that $I(h',k') = I(h,k)$. 
\end{defn}
 The following is then proved in the same way as \cite[Lem.~5.10]{ricka_steenrod}.
\begin{lem}
  A profile function is minimal if and only if for all $i,n \ge 0$, $\tau_i^{2^n} \in I(h,k)$ is equivalent to $n \ge k(i)$. 
   \end{lem}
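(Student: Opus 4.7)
The plan is to first characterize explicitly when pure powers $\tau_i^{2^n}$ and $\xi_i^{2^n}$ lie in $I(h,k)$, and then translate minimality into these conditions. The key computation is that the relation $\tau_i^2=\tau\xi_{i+1}$ in $\cA_{\ast,\ast}$ gives $\tau_i^{2^n}=\tau^{2^{n-1}}\xi_{i+1}^{2^{n-1}}$ for $n\ge 1$, so $\tau_i^{2^n}\in I(h,k)$ whenever $n\ge k(i)$ (using the generator $\tau_i^{2^{k(i)}}$ directly) or $n\ge h(i+1)+1$ (using $\xi_{i+1}^{2^{h(i+1)}}$ absorbed into a power of $\tau$). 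The content is that these are the \emph{only} reasons: inverting $\tau$ gives $\cA_{\ast,\ast}[\tau^{-1}]\cong\M_2[\tau^{-1}][\tau_0,\tau_1,\ldots]$, a polynomial ring in which the localized ideal $I(h,k)[\tau^{-1}]$ is generated by the monomials $\tau_j^{2^{k(j)}}$ and $\tau_{i-1}^{2^{h(i)+1}}$ (the latter coming from $\xi_i^{2^{h(i)}}=\tau^{-2^{h(i)}}\tau_{i-1}^{2^{h(i)+1}}$), and monomial divisibility in a polynomial ring is immediate. This yields
\[
\tau_i^{2^n}\in I(h,k)\iff n\ge\min(k(i),h(i+1)+1).
\]
A parallel grading argument -- assign $\tau$ degree $2$, each $\tau_j$ degree $1$, and each $\xi_j$ degree $0$, consistent with the defining relation, so that every $\tau_j^{2^{k(j)}}$-generator has positive degree and cannot contribute to the degree-$0$ part of $I(h,k)$ -- shows that the degree-$0$ part of $I(h,k)$ is the ideal $(\xi_i^{2^{h(i)}})\subseteq\F_2[\xi_1,\xi_2,\ldots]$. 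Ordinary monomial ideal theory then yields
\[
\xi_i^{2^n}\in I(h,k)\iff n\ge h(i).
\]

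Granting these two characterizations, the lemma follows formally. For the forward direction, suppose $(h,k)$ is minimal and $\tau_i^{2^n}\in I(h,k)$ with $n<k(i)$. Defining $k'$ by $k'(i)=n$ and $k'(j)=k(j)$ otherwise gives $(h,k')<(h,k)$ strictly, while $I(h,k')=I(h,k)$ since $\tau_i^{2^{k(i)}}=(\tau_i^{2^n})^{2^{k(i)-n}}\in I(h,k')$ and all other generators agree; this contradicts minimality. For the converse, assume $\tau_i^{2^n}\in I(h,k)\iff n\ge k(i)$ and let $(h',k')\le(h,k)$ satisfy $I(h',k')=I(h,k)$. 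If $k'(j)<k(j)$ for some $j$, then $\tau_j^{2^{k'(j)}}\in I(h',k')=I(h,k)$, forcing $k'(j)\ge k(j)$ by hypothesis, a contradiction. Similarly, if $h'(i)<h(i)$, then $\xi_i^{2^{h'(i)}}\in I(h,k)$, contradicting the $\xi$-characterization. Thus $(h',k')=(h,k)$.

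The main technical step is establishing the two explicit membership characterizations, since $\cA_{\ast,\ast}$ is a quotient of a polynomial ring by the relation $\tau_i^2=\tau\xi_{i+1}$ and ideal membership is not literally a monomial divisibility question. Localizing at $\tau$ reduces the $\tau$-statement to monomial divisibility in the polynomial ring $\M_2[\tau^{-1}][\tau_0,\tau_1,\ldots]$, while the weight grading reduces the $\xi$-statement to monomial divisibility in $\F_2[\xi_1,\xi_2,\ldots]$; together these handle the substantive part of the argument, and the deduction of the lemma is then a routine manipulation of generators.
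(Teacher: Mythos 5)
Your proof is correct, and it is more self-contained than what the paper offers: the paper does not write out an argument at all, but simply asserts that the lemma ``is proved in the same way as'' Ricka's Lemma 5.10, where the key point is likewise a characterization of when the powers $\tau_i^{2^n}$ lie in $I(h,k)$. Your route to that characterization is a clean packaging of the necessary bookkeeping: inverting $\tau$ turns $\cA_{\ast,\ast}$ into the polynomial algebra $\F_2[\tau^{\pm 1}][\tau_0,\tau_1,\ldots]$ (this is exactly \Cref{lem:haequivalence}), where $I(h,k)$ becomes the monomial ideal generated by $\tau_j^{2^{\min(k(j),h(j+1)+1)}}$, giving the ``only if'' half of $\tau_i^{2^n}\in I(h,k)\iff n\ge\min(k(i),h(i+1)+1)$, while the ``if'' half is exhibited directly in $\cA_{\ast,\ast}$; and the auxiliary grading $\deg\tau=2$, $\deg\tau_j=1$, $\deg\xi_j=0$ (which is just $t-2w$, hence genuinely a grading on $\cA_{\ast,\ast}$) isolates $\F_2[\xi_1,\xi_2,\ldots]$ in degree $0$ and shows that $\xi_i^{2^n}\in I(h,k)\iff n\ge h(i)$ unconditionally. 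That last point is the subtle one, since it explains why minimality can be tested on the $\tau_i$ alone: no profile function can ever have its $h$-part lowered without changing the ideal, so only the $k$-direction is at issue, and your poset argument then closes both implications correctly. The only nitpick is in the forward direction, where your identity $\tau_i^{2^{k(i)}}=(\tau_i^{2^n})^{2^{k(i)-n}}$ does not parse when $k(i)=\infty$; but in that case $I(h,k)\subseteq I(h,k')$ holds trivially because $I(h,k)$ has no $\tau_i$-generator at all, so the conclusion is unaffected.
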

We now introduce conditions that ensure that $B_{\ast,\ast}(h,k)$ is a quotient Hopf algebra of $\cA$. 
\begin{cond}\label{cond1}
  For all $i,j \ge 1$ we have $h(i) \le j + h(i+j)$ or $h(j) \le h(i+j)$.
  \end{cond}
  \begin{cond}\label{cond2}
  For all $i \ge 1,j \ge 0$ we have $h(i) \le j + k(i+j)$ or $k(j) \le k(i+j)$.   
  \end{cond}
For the following compare \cite[Prop.~5.13]{ricka_steenrod} or the odd-primary case of \cite{am_sub}. 
\begin{prop}\label{prop:quotientminimal}
  Let $(h,k)$ be a minimal profile functions satisfying \Cref{cond1,cond2}. Then the quotient algebra $B_{\ast,\ast} = B_{\ast,\ast}(h,k)$ is a quotient Hopf algebra of $\cal{A}_{\ast,\ast}$.  
\end{prop}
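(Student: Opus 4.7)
The plan is to show that $I(h,k)$ is a Hopf ideal, i.e., an ideal (which it is by construction) that is also a coideal and is stable under the antipode. The counit condition $\epsilon(I(h,k)) = 0$ is immediate since the generators $\xi_n^{2^{h(n)}}$ and $\tau_n^{2^{k(n)}}$ have positive topological degree. Antipode stability follows once the coideal property is established, because $\cA_{\ast,\ast}$ is commutative and the antipode of $\xi_n$ (resp.\ $\tau_n$) is a polynomial in the $\xi_j$ (resp.\ $\xi_j$ and $\tau_j$); raising such expressions to the $2^{h(n)}$-th (resp.\ $2^{k(n)}$-th) power produces only summands that are powers of generators satisfying the same numerical constraints. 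So the core of the proof will be the coideal property, and for this it suffices to check it on the generators of $I(h,k)$.

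The key tool is that $\cA_{\ast,\ast}$ is a commutative $\M_2$-algebra of characteristic $2$, so the Frobenius is a ring homomorphism. Applying it to the coproduct formulas of \Cref{thm:voev}(iv) gives
\[
\psi_{\ast}(\xi_n^{2^{h(n)}}) = \sum_{a+b=n} \xi_a^{2^{b+h(n)}} \otimes \xi_b^{2^{h(n)}},
\]
and
\[
\psi_{\ast}(\tau_n^{2^{k(n)}}) = \sum_{a+b=n} \xi_a^{2^{b+k(n)}} \otimes \tau_b^{2^{k(n)}} + \tau_n^{2^{k(n)}} \otimes 1.
\]
Each summand $x\otimes y$ will lie in $I(h,k)\otimes \cA_{\ast,\ast} + \cA_{\ast,\ast}\otimes I(h,k)$ as soon as either tensor factor is in $I(h,k)$.

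For the $\xi$-coproduct, the summands with $a=0$ or $b=0$ reduce to $1\otimes \xi_n^{2^{h(n)}}$ or $\xi_n^{2^{h(n)}}\otimes 1$ respectively, so they are tautologically in the ideal. For $a,b\ge 1$ with $a+b=n$, the factor $\xi_a^{2^{b+h(n)}}$ lies in $I(h,k)$ iff $h(a)\le b + h(a+b)$, and $\xi_b^{2^{h(n)}}$ lies in $I(h,k)$ iff $h(b)\le h(a+b)$; that at least one of these holds is exactly \Cref{cond1}. The $\tau$-coproduct is handled analogously: the $a=0$ term gives $1\otimes \tau_n^{2^{k(n)}}$, the $\tau_n^{2^{k(n)}}\otimes 1$ term is in the ideal, and for the remaining summands ($a\ge 1$, $b\ge 0$, $a+b=n$) we need $h(a)\le b + k(a+b)$ or $k(b)\le k(a+b)$, which is \Cref{cond2}.

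The one subtle point, and the step I expect to be the main obstacle, is interpreting when $\xi_a^N$ or $\tau_b^N$ actually belongs to $I(h,k)$. Because of the relation $\tau_i^2 = \tau\xi_{i+1}$, high powers of $\xi_i$ and $\tau_{i-1}$ are linked, and naively the ideal $I(h,k)$ could contain elements that are not simply captured by the defining generators. This is exactly where minimality of $(h,k)$ is used: by the characterization in the preceding lemma, minimality ensures $\tau_i^{2^n}\in I(h,k)$ precisely when $n\ge k(i)$, and a parallel statement for $\xi_i$ can be deduced from the relation $\tau_i^2 = \tau\xi_{i+1}$ together with minimality. Once this clean numerical criterion for membership in $I(h,k)$ is in hand, the term-by-term verification above completes the proof.
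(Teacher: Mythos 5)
Your proposal is correct and takes essentially the same approach as the paper: apply the Frobenius to the coproduct formulas for $\xi_n^{2^{h(n)}}$ and $\tau_n^{2^{k(n)}}$ and use \Cref{cond1,cond2} term by term to see that each summand has a tensor factor in $I(h,k)$. The "subtle point" you flag at the end only requires the easy direction (if the exponent bound holds, the factor is a multiple of a defining generator, hence in $I(h,k)$), so minimality enters only in the same mild way as in the paper's proof, and the counit/antipode issues you mention are automatic for connected graded bialgebras, which is why the paper checks only that the coproduct descends.
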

\begin{proof}
 We simply need to check that the coproduct passes through to the quotient. To that end, note that we have
  \[
\psi(\xi_n^{2^{h(n)}}) = \sum_{i+j=n} \xi_i^{2^{j+h(n)}} \otimes \xi_j^{2^{h(n)}} 
  \]
  and
  \[
\psi(\tau_n^{2^{k(n)}}) = \tau_n^{2^{k(n)}} \otimes 1 + \sum_{i+j=n} \xi_i^{2^{j+k(n)}} \otimes \tau_j^{2^{k(n)}}.
  \]
  Assuming \Cref{cond1}, then either $\xi_i^{2^{j+h(n)}} $ or $\xi_j^{2^{h(n)}}$ are in $I(h,k)$. By definition $\tau_n^{2^{k(n)}}$ is in $I(h,k)$, whilst if we assume \Cref{cond2} then either $ \xi_i^{2^{j+k(n)}}$ or $\tau_j^{2^{k(n)}}$ are in $I(h,k)$. Note that the latter condition is equivalent to $k(j) \le k(n)$ by minimality. It follows that $\psi$ passes to the quotient as required. 
\end{proof}

Recall that the classical Steenrod algebra is a Hopf algebra of finite type; that is, it is finitely generated as an $\F_2$-module in each degree. Since we are working over a field it is in addition automatically free as an $\F_2$-module in each degree. For such Hopf algebras, the dual inherits the structure of a Hopf algebra. One then easily checks that there is a one-to-one correspondence between quotient Hopf algebras of the dual Steenrod algebra, and Hopf subalgebras of the Steenrod algebra.  Indeed, as noted in the introduction, the classification of Hopf subalgebras of $\cA^{\text{cl}}$ is proved via the classification of all quotient Hopf algebras of $\cA^{\text{cl}}_{*}$. 

The situation in the motivic Steenrod algebra is more complicated since we work over the commutative ring $\M_2$. Since $\M_2$ is a graded principal ideal domain and $\cA_{\ast,\ast}$ is free as an $\M_2$-module, any Hopf subalgebra $B$ of $\cA$ is automatically free as an $\M_2$-module, and hence the dual quotient Hopf algebra of $\cA_{\ast,\ast}$ is a free $\M_2$-module. However, if we start with a quotient Hopf algebra $C_{\ast,\ast}$ of $\cA_{\ast,\ast}$, then, in general, the best we can conclude is that the dual has an $\M_2$-free cokernel. For this reason, we restrict ourselves to those quotient Hopf algebras which are free as $\M_2$-modules. Once again, our work is inspired by that of Ricka, although his result (in the context of the $C_2$-equivariant Steenrod algebra), is more complicated.
\begin{defn}
  A profile function $(h,k)$ is free if for all $i \ge 0$ we have 
  \[
  h(i+1) \le \begin{cases}
    k(i)-1, & k(i) \ne 0 \\
    0, & k(i) = 0. 
  \end{cases}
  \]
\end{defn}
\begin{rem}\label{rem:free}
\begin{enumerate}
\item One can easily check that if $(h,k)$ is a free profile function satisfying \Cref{cond1}, then \Cref{cond2} is automatically satisfied.   
\item   For a simple example of a profile function that is not free, but still satisfies \Cref{cond1,cond2}, one can take $h = k =(1,1,1,\ldots)$. 
  
\end{enumerate}

\end{rem}

Although we cannot give a complete classification of quotient Hopf algebras of $\cA_{\ast,\ast}$, the free profile functions do give a complete classification of those that are free as $\M_2$-modules. 

\begin{thm}\label{thm:freeclassification}
  A quotient Hopf algebra of $\cA_{\ast,\ast}$ is free as an $\M_2$-module if and only if it is of the form $B_{\ast,\ast}(h,k)$ for a free profile function satisfying \Cref{cond1,cond2}.
  \end{thm}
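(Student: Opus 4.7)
Suppose $(h,k)$ is a free profile function satisfying \Cref{cond1,cond2}. The argument of \Cref{prop:quotientminimal} endows $B_{\ast,\ast}(h,k)$ with the structure of a quotient Hopf algebra of $\cA_{\ast,\ast}$ (the minimality hypothesis there is invoked only to conclude $\tau_b^{2^{k(n)}}\in I(h,k)$ from $k(b)\le k(n)$, which actually holds for any profile). To check $\M_2$-freeness, set
\[
R \coloneqq \M_2[\xi_i\mid i\ge 1]/(\xi_i^{2^{h(i)}}),
\]
which is plainly $\M_2$-free. Iterating Voevodsky's relation gives $\tau_j^{2^n} = \tau^{2^{n-1}}\xi_{j+1}^{2^{n-1}}$ for $n\ge 1$. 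The free profile inequality $h(j+1)\le k(j)-1$ (for $k(j)\ge 1$) renders the relation $\tau_j^{2^{k(j)}}=0$ redundant over $R$, while the condition $h(j+1)=0$ when $k(j)=0$ makes imposing $\tau_j=0$ consistent with $\M_2$-freeness (it forces $\xi_{j+1}=0$, which already holds). Writing $J=\{j : k(j)\ge 1\}$, one thus identifies
\[
B_{\ast,\ast}(h,k) \;\cong\; \bigotimes_{j\in J}^{R} R[\tau_j]/(\tau_j^2-\tau\xi_{j+1}),
\]
a (possibly infinite) tensor product over $R$ of free rank-two $R$-modules, hence $R$-free and a fortiori $\M_2$-free.

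\textbf{Necessity.} Conversely, let $C_{\ast,\ast}$ be an $\M_2$-free quotient Hopf algebra of $\cA_{\ast,\ast}$, and define $(h,k)$ as the smallest profile for which $\xi_i^{2^{h(i)}}=0$ and $\tau_j^{2^{k(j)}}=0$ in $C_{\ast,\ast}$ (with $\min\emptyset=\infty$). Since $C_{\ast,\ast}$ is $\M_2$-free, $\tau$ is a non-zero-divisor on it. When $k(j)\ge 1$, the identity $0=\tau_j^{2^{k(j)}} = \tau^{2^{k(j)-1}}\xi_{j+1}^{2^{k(j)-1}}$ forces $\xi_{j+1}^{2^{k(j)-1}}=0$, whence $h(j+1)\le k(j)-1$; when $k(j)=0$, the relation $\tau_j=0$ forces $\tau\xi_{j+1}=0$ and hence $\xi_{j+1}=0$, so $h(j+1)=0$. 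Thus $(h,k)$ is free. \Cref{cond1,cond2} follow by a standard Hopf-algebra argument: $C_{\ast,\ast}\otimes_{\M_2}C_{\ast,\ast}$ is $\M_2$-free, so the vanishing of $\psi(\xi_n^{2^{h(n)}})$ in $C\otimes C$ forces each summand $\xi_a^{2^{b+h(n)}}\otimes \xi_b^{2^{h(n)}}$ (with $a+b=n$) to have one tensor factor equal to zero in $C_{\ast,\ast}$, giving \Cref{cond1}; the analogous analysis of $\psi(\tau_n^{2^{k(n)}})$ yields \Cref{cond2}.

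\textbf{Identification and main obstacle.} By construction $I(h,k)\subseteq \ker(\cA_{\ast,\ast}\twoheadrightarrow C_{\ast,\ast})$, giving a surjection $B_{\ast,\ast}(h,k)\twoheadrightarrow C_{\ast,\ast}$ between $\M_2$-free modules. To prove this is an isomorphism it suffices to check it after inverting $\tau$. By \Cref{lem:haequivalence}, both $\tau^{-1}B_{\ast,\ast}(h,k)$ and $\tau^{-1}C_{\ast,\ast}$ are quotient Hopf algebras of $\tau^{-1}\M_2\otimes_{\F_2}\cA^{\mathrm{cl}}_{\ast}$, and by the classical Milnor--Moore/Adams--Margolis classification each is therefore determined by a single classical profile function. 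Translating the defining relations of $I(h,k)$ via the explicit embedding $\xi_k\mapsto \tau^{1-2^k}\xi_k^2$, $\tau_k\mapsto \tau^{1-2^k}\xi_{k+1}$, both $\tau^{-1}B_{\ast,\ast}(h,k)$ and $\tau^{-1}C_{\ast,\ast}$ correspond to the same classical profile, hence are equal; this completes the proof. The main obstacle lies in this last step: one must carefully track how the minimality of $(h,k)$ (the non-vanishing of $\xi_i^{2^{h(i)-1}}$ and $\tau_j^{2^{k(j)-1}}$) translates into the precise non-vanishing statements for the classical generators, with particular care in the degenerate case $k(j)=h(j+1)=0$, where the motivic generators $\tau_j$ and $\xi_{j+1}$ vanish simultaneously and contribute the same relation to the classical profile.
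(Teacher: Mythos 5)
Your proof is correct, and it reorganizes the argument compared to the paper. The paper proves both directions at once through a single localization argument: it observes that a quotient $B_{\ast,\ast}$ is $\M_2$-free exactly when its defining ideal $I$ is the contraction of the kernel of $\tau^{-1}\cA_{\ast,\ast}\to\tau^{-1}B_{\ast,\ast}$, invokes the Adams--Margolis classification of quotients of $\cA^{\mathrm{cl}}_{\ast}$ (via the weight-zero part, using periodicity of $\tau^{-1}\M_2$), and reads off that such contractions are precisely the ideals $I(h,k)$ with $k(i)=h_{\mathrm{cl}}(i+1)$, $h(i)=h_{\mathrm{cl}}(i)-1$, which are exactly the free profiles. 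You instead prove sufficiency by hand, exhibiting $B_{\ast,\ast}(h,k)$ as an iterated rank-two free extension of $R=\M_2[\xi_i]/(\xi_i^{2^{h(i)}})$ -- this is sound (the relation $\tau_j^{2^{k(j)}}=\tau^{2^{k(j)-1}}\xi_{j+1}^{2^{k(j)-1}}$ is indeed redundant under freeness, and your remark that \Cref{prop:quotientminimal} does not actually need minimality is correct), and it gives the explicit $\M_2$-basis that the paper establishes in a separate proposition; you also derive freeness of the profile and \Cref{cond1,cond2} directly from $\tau$ being a non-zero-divisor and from the coproduct formulas, rather than importing them from the classical conditions on $h_{\mathrm{cl}}$. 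For the identification $B_{\ast,\ast}(h,k)\cong C_{\ast,\ast}$ you fall back on the same key step as the paper ($\tau$-inversion plus the classical classification), and your reduction "surjection of $\M_2$-free modules which is an isomorphism after inverting $\tau$ is an isomorphism" is valid since the kernel would be $\tau$-torsion inside a torsion-free module. Two points are only sketched but do go through: applying Adams--Margolis over $\tau^{-1}\M_2$ requires the weight-zero/periodicity reduction that the paper makes explicit, and the final profile-matching works because minimality of $(h,k)$ for $C$ together with torsion-freeness forces $k(j)=h(j+1)+1$ whenever $1\le k(j)<\infty$, so that both $\tau^{-1}B_{\ast,\ast}(h,k)$ and $\tau^{-1}C_{\ast,\ast}$ have classical profile $h_{\mathrm{cl}}(i)=k(i-1)$; you correctly flag this, including the degenerate case $k(j)=h(j+1)=0$, so the gap is one of detail rather than substance.
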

\begin{proof}

Let $\cA_{\ast,\ast}\to B_{\ast,\ast}$ be a quotient map, and $I \subset \cA_{\ast,\ast}$ its kernel. $B$ is free as an $\M_2$-module if and only if $I$ agrees with the kernel of $\cA_{\ast,\ast} \to \tau^{-1} B$, or equivalently, if $I$ agrees with the intersection of $\cA_{\ast,\ast}$ and the kernel of $\tau^{-1}\cA_{\ast,\ast} \to \tau^{-1} B$. 

Since $\tau^{-1}\M_2$ is periodic, we can recover this quotient map from its weight $0$ part. Under the isomorphism $\tau^{-1}\cA_{\ast,\ast} \cong \tau^{-1}\M_2\otimes_{\F_2}\cA^{\mathrm{cl}}_{*}$ of \Cref{lem:haequivalence}, this weight $0$ part gives a Hopf algebra quotient map out of the classical dual Steenrod algebra.

Adams and Margolis \cite{am_sub} showed that any Hopf algebra quotient of the classical dual Steenrod algebra is of the form $\cA^{\mathrm{cl}}_{\ast}/(\xi_i^{2^{h_{\text{cl}}(i)}})$. Here $i$ ranges over integers $\geq 1$, $h_{\text{cl}}(i)$ takes values in nonnegative integers or $\infty$, and for each $i,j$ we have $h_{\text{cl}}(i)\leq h_{\text{cl}}(i+j)+j$ or $h_{\text{cl}}(j)\leq h_{\text{cl}}(i+j)$.

From the description of the isomorphism $\tau^{-1}\cA_{\ast,\ast} \cong \tau^{-1}\M_2\otimes_{\F_2}\cA^{\mathrm{cl}}_{*}$ we see that the kernel of $\tau^{-1}\cA_{\ast,\ast} \to \tau^{-1} B$ is generated as an ideal in $\tau^{-1}\cA_{\ast,\ast}$ by $\tau_i^{2^{h_{\text{cl}}(i+1)}}$ and $\xi_i^{2^{h_{\text{cl}}(i)-1}}$.

This yields that the intersection of this kernel with $\cA_{\ast,\ast}$ is generated as an ideal in $\cA_{\ast,\ast}$ by $\tau_i^{2^{h_{\text{cl}}(i+1)}}$ and $\xi_i^{2^{h_{\text{cl}}(i)-1}}$, and so is of the form $I(h,k)$ for $k(i)=h_{\text{cl}}(i+1)$ and $h(i) = h_{\text{cl}}(i)-1$. \Cref{cond1} and \Cref{cond2} for $(h,k)$ of this form are equivalent to the classical condition for $h_{\text{cl}}$, and $(h,k)$ is obviously a free profile function. Vice-versa, any free profile function is determined by an $h_{\text{cl}}$ in this way.
\end{proof}

\begin{prop}
  Let $(h,k)$ be a free profile function satisfying \Cref{cond1,cond2}. Let $(e_0,e_1,\ldots)$ range over sequences of ones and zeroes, almost all zero, with $e_i < 2^{k(i)}$, and let $(r_1,r_2,\ldots)$ range over sequences of nonnegative integers, almost all zero, with $r_i < 2^{h(i)}$. Then, as an $\M_2$-module, $B_{\ast,\ast}(h,k)$ is free with basis
  \[
\cal{B}_{h,k} \coloneqq \{ \Pi_{i \ge 0} \tau_i^{e_i} \Pi_{j \ge 1} \xi_j^{r_j}\}. 
  \]
\end{prop}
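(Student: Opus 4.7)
The strategy is to deduce the basis statement from two inputs already established: (a) the fact that $B_{\ast,\ast}(h,k)$ is $\M_2$-free (\Cref{thm:freeclassification}), and (b) the classical Adams--Margolis classification applied after inverting $\tau$. Let $M$ denote the free $\M_2$-module with basis $\cB_{h,k}$, and let $\varphi\colon M \to B_{\ast,\ast}(h,k)$ be the map sending each symbol to the class of the corresponding monomial in $\cA_{\ast,\ast}$. We need to show $\varphi$ is an isomorphism.

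First I would dispense with surjectivity. By \Cref{lem:dualfree}, $\cA_{\ast,\ast}$ is $\M_2$-spanned by monomials $\prod\tau_i^{e_i}\prod\xi_j^{r_j}$ with $e_i\in\{0,1\}$ and $r_j\geq 0$. Any such monomial with $r_j\geq 2^{h(j)}$ for some $j$ is divisible by $\xi_j^{2^{h(j)}}\in I(h,k)$, hence vanishes in the quotient; similarly, any monomial with $e_i=1$ while $k(i)=0$ contains the factor $\tau_i\in I(h,k)$ and vanishes. The surviving monomials are exactly those in $\cB_{h,k}$, giving surjectivity.

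For injectivity, I would pass to $\tau^{-1}$. Since $B_{\ast,\ast}(h,k)$ is $\M_2$-free by \Cref{thm:freeclassification}, it is $\tau$-torsion free, so $B_{\ast,\ast}(h,k)\hookrightarrow \tau^{-1}B_{\ast,\ast}(h,k)$; the same holds for $M$. It therefore suffices to show that $\tau^{-1}\varphi$ is injective (indeed an isomorphism). Via \Cref{lem:haequivalence} and the description in the proof of \Cref{thm:freeclassification}, $\tau^{-1}B_{\ast,\ast}(h,k)$ is identified with $\tau^{-1}\M_2 \otimes_{\F_2}(\cA^{\mathrm{cl}}_*/I_{\mathrm{cl}})$, where $I_{\mathrm{cl}} = (\xi_\ell^{2^{h_{\mathrm{cl}}(\ell)}})$ with $h_{\mathrm{cl}}(\ell)=k(\ell-1)$ (and $h(\ell)=h_{\mathrm{cl}}(\ell)-1$ in the non-degenerate range). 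By classical Adams--Margolis, this quotient has $\F_2$-basis $\{\prod \xi_\ell^{s_\ell} : s_\ell < 2^{h_{\mathrm{cl}}(\ell)}\}$.

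The final step is a combinatorial bookkeeping: under the isomorphism of \Cref{lem:haequivalence}, a motivic monomial $\prod\tau_i^{e_i}\prod\xi_j^{r_j}$ maps to a unit $\tau$-power times the classical monomial with exponent $s_\ell = e_{\ell-1}+2r_\ell$. The binary decomposition $s_\ell = e_{\ell-1}+2r_\ell$ with $e_{\ell-1}\in\{0,1\}$ and $r_\ell\geq 0$ is unique, and using the free profile function relation $h(\ell)+1 = k(\ell-1)$ (with the degenerate case $k(\ell-1)=0$ forcing $h(\ell)=0$ and $s_\ell=0$), the inequality $s_\ell<2^{h_{\mathrm{cl}}(\ell)}$ translates exactly to the joint conditions $e_{\ell-1}<2^{k(\ell-1)}$ and $r_\ell<2^{h(\ell)}$. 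Hence $\cB_{h,k}$ is in bijection with the classical monomial basis, and $\tau^{-1}\varphi$ sends a $\tau^{-1}\M_2$-basis to a $\tau^{-1}\M_2$-basis, so is an isomorphism. The main obstacle, and the only nontrivial point, is this last bookkeeping step verifying that the free profile function condition is exactly what makes the motivic decomposition into $\tau_i$- and $\xi_j$-parts agree with the classical exponent bound.
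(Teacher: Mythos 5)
Your proposal is correct and is essentially the paper's own argument: the paper likewise realizes $B_{\ast,\ast}(h,k)$ as the image of $\cA_{\ast,\ast}$ inside $\tau^{-1}\M_2\otimes_{\F_2}B^{\mathrm{cl}}_{\ast}(h')$ with $h'(i)=k(i-1)$ (relying on the $\M_2$-freeness identification from \Cref{thm:freeclassification}) and then quotes the classical basis of $B^{\mathrm{cl}}_{\ast}(h')$. The only difference is level of detail: you spell out the surjectivity step and the exponent bookkeeping $s_\ell=e_{\ell-1}+2r_\ell$ (with the freeness inequality guaranteeing $s_\ell<2^{h'(\ell)}$), which the paper leaves implicit.
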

\begin{proof}
We can again consider $B_{\ast,\ast}(h,k)$ as the image of $\cA_{\ast,\ast}$ in $\tau^{-1}\M_2\otimes_{\F_2} B^{\text{cl}}_{\ast}(h')$ with $h'(i)= k(i-1)$. Then the statement follows from the corresponding classical statement, which is that $B^{\text{cl}}_{\ast}(h')$ has basis the monomials 
\[
 \Pi_{i \ge 1} \xi_i^{r'_i}
\]
for $r'_i < h'(i)$.
\end{proof}
We now introduce a useful family of Hopf subalgebras of $\cA$. 
\begin{exmp}
   \normalfont Let $\cA(n)_{\ast,\ast}$ be the quotient Hopf algebra of $\cA_{\ast,\ast}$  defined by the minimal profile function $h = (n,n-1,n-2,\ldots,1,0,\ldots)$ and $k = (n+1,n,n-1,\ldots,1,0,\ldots)$. Since $h(i+1) = k(i)-1$ for $0 \le i \le n$ and both are 0 otherwise, the profile function is free, and $\cA(n)_{\ast,\ast}$ is a finitely generated free $\M_2$-module. Of course, one easily sees that 
   \[
\cA(n)_{\ast,\ast} \cong \M_2[\xi_1,\xi_2,\ldots,\xi_n,\tau_0,\tau_1,\ldots,\tau_n]/(\xi_i^{2^{n+1-i}},\tau_i^2 = \tau \xi_{i+1},\tau_n^2=0). 
  \]
It is dual to the subalgebra of $\cA$ generated by $\Sq^{2^i}$ for $i \le n$.
    \end{exmp}
\section{The Hopf algebra $\cA/\tau$}\label{sec:amodtau}
Let $\cA/\tau$ denote the quotient of the motivic Hopf algebra by $\tau$; note that this is a Hopf algebra over the field $\F_2 \cong \M_2/\tau$. In the introduction we claimed that this is a 2-primary version of the classical odd-primary Steenrod algebra. Indeed, one can check that in $\cA/\tau$ the Adem relations are the same as those in the classical odd-primary Steenrod algebra, under the identification $P^a = \Sq^{2a}$.  However, we do not need this result, rather we work with the dual $(\cA/\tau)_{\ast,\ast} = \Hom_{\F_2}(\cA/\tau,\M_2/\tau)$. We start with a general result about $\cA_{*,*}/\tau$-modules. 

\begin{lem}
If $M$ is an $\cA$-module, which is free as an $\M_2$-module, then there is an equivalence of $\cA_{\ast,\ast}/\tau$-modules between $(M/\tau)_{\ast,\ast} = \Hom_{\F_2}(M/\tau,\F_2)$ and $M_{\ast,\ast}/\tau = \Hom_{\M_2}(M,\M_2)/\tau$.
\end{lem}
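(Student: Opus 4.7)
The plan is to identify both $M_{\ast,\ast}/\tau$ and $(M/\tau)_{\ast,\ast}$ with a common third object, namely $\Hom_{\M_2}(M,\F_2)$, and then note that the whole construction is natural enough to respect the $\cA_{\ast,\ast}/\tau$-action.

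First, I would apply $\Hom_{\M_2}(M,-)$ to the short exact sequence of $\M_2$-modules
\[
0 \to \M_2 \xrightarrow{\tau} \M_2 \to \F_2 \to 0.
\]
Because $M$ is free (hence projective) over $\M_2$, this functor is exact, yielding a short exact sequence whose final term is $\Hom_{\M_2}(M,\F_2)$, so $M_{\ast,\ast}/\tau \cong \Hom_{\M_2}(M,\F_2)$. Second, the tensor--hom adjunction along the ring map $\M_2 \to \M_2/\tau = \F_2$ gives a natural isomorphism
\[
\Hom_{\M_2}(M,\F_2) \cong \Hom_{\F_2}(M \otimes_{\M_2} \F_2,\F_2) = \Hom_{\F_2}(M/\tau,\F_2) = (M/\tau)_{\ast,\ast}.
\]
Composing the two produces an $\F_2$-linear isomorphism $M_{\ast,\ast}/\tau \cong (M/\tau)_{\ast,\ast}$.

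To upgrade this to an isomorphism of $\cA_{\ast,\ast}/\tau$-modules I would rely on naturality: the short exact sequence above is one of $\cA$-modules (with the augmentation action on the scalars $\M_2$ and $\F_2$), and $\Hom_{\M_2}(M,-)$ together with the change-of-rings adjunction are functorial in the $\cA$-module $M$. Hence each step carries the action coming from the $\cA$-structure on $M$, and the composite therefore intertwines the $\cA_{\ast,\ast}/\tau$-actions on both sides. I do not expect any serious obstacle in the argument: freeness of $M$ over $\M_2$ is precisely what eliminates the potential $\Ext^1_{\M_2}(M,\M_2)$ contribution in the first step, and everything else is formal naturality. The only mild care required is in the bookkeeping of the module structures at the final stage.
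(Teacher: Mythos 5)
Your argument is correct and follows essentially the same route as the paper: both proofs identify the two sides with $\Hom_{\M_2}(M,\M_2/\tau)$, using exactness of $\Hom_{\M_2}(M,-)$ on $0 \to \M_2 \xrightarrow{\tau} \M_2 \to \M_2/\tau \to 0$ (which needs $\M_2$-freeness of $M$) on one side and the extension/restriction-of-scalars adjunction on the other, and both dispose of the $\cA_{\ast,\ast}/\tau$-linearity by naturality. No gap beyond the same brief hand-wave the paper itself makes about the module structures.
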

\begin{proof}
    First note that 
  \[
  \begin{split}
(M/\tau)_{\ast,\ast} = \Hom_{\F_2}(M/\tau,\M_2/\tau) &\cong \Hom_{\M_2}(M/\tau,\M_2/\tau) \\
& \cong \Hom_{\M_2}(M,\M_2/\tau). 
\end{split}
  \]
  There is a short exact sequence
  \[
0 \to \M_2 \xr{\tau} \M_2 \to \M_2/\tau \to 0,
  \]
  and applying the exact functor $\Hom_{\M_2}(M,-)$ (since $M$ is $\M_2$-free) we get a short exact sequence
  \[
0 \to M_{\ast,\ast} \xr{\tau} M_{\ast,\ast} \to \Hom_{\M_2}(M,\M_2/\tau) \to 0.
  \]
   It follows that, as $\F_2$-modules, $(\cA/\tau)_{\ast,\ast} \cong \cA_{\ast,\ast}/\tau$, and it is easy to see that this is an isomorphism of $\cA_{*,*}/\tau$-modules. 
\end{proof}

Note that since $\tau$ does not appear in the formula for the comultiplication of $\cA_{\ast,\ast}$, the quotient $\cA_{\ast,\ast}/\tau$ has the structure of a Hopf algebra with comultiplication given by
\[
\begin{split}
\psi_*(\xi_k) &= \sum_{i=0}^k \xi_{k-i}^{2^i} \otimes \xi_i \\
\psi_*(\tau_k) &= \sum_{i=0}^k \xi_{k-i}^{2^i} \otimes \tau_i + \tau_k \otimes 1. 
\end{split}
\]
We then have the following. 
\begin{cor}\label{lem:dualident}
  As a Hopf algebra, the dual $(\cA/\tau)_{\ast,\ast}$ is isomorphic to 
   \[
\cA_{\ast,\ast}/\tau \cong \F_2[\xi_1,\ldots] \otimes \Lambda_{\F_2}(\tau_0,\ldots). 
\]
\end{cor}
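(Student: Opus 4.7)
The plan is to simply specialize the preceding lemma to $M=\cA$ and then read off the Hopf algebra structure from Voevodsky's presentation.

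First, I would observe that by \Cref{thm:voev}(ii), $\cA$ is free as an $\M_2$-module, so the hypothesis of the preceding lemma is satisfied with $M=\cA$. Applying that lemma gives an isomorphism $(\cA/\tau)_{\ast,\ast} \cong \cA_{\ast,\ast}/\tau$ of $\cA_{\ast,\ast}/\tau$-modules (and in particular of $\F_2$-modules). I would then note that this identification is naturally multiplicative: the multiplication on $(\cA/\tau)_{\ast,\ast}$ is dual to the comultiplication on $\cA/\tau$, which is induced from that on $\cA$ by passing to the quotient; dually this is exactly the multiplication on $\cA_{\ast,\ast}$ reduced modulo $\tau$.

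Next, to get the explicit algebra description, I would invoke Voevodsky's presentation \[\cA_{\ast,\ast} \cong \M_2[\xi_{i+1},\tau_i \mid i\geq 0]/(\tau_i^2 + \tau\xi_{i+1}).\] Reducing modulo $\tau$ turns the relation $\tau_i^2 = \tau\xi_{i+1}$ into $\tau_i^2 = 0$ while leaving the $\xi_j$'s free of relations, yielding the claimed algebra isomorphism $\cA_{\ast,\ast}/\tau \cong \F_2[\xi_1,\xi_2,\ldots]\otimes \Lambda_{\F_2}(\tau_0,\tau_1,\ldots)$.

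Finally, for the coalgebra structure, I would point out (as the paragraph preceding the statement already does) that $\tau$ does not appear in the Voevodsky formulas for $\psi_\ast(\xi_k)$ and $\psi_\ast(\tau_k)$, so these formulas descend to give a well-defined comultiplication on $\cA_{\ast,\ast}/\tau$; dually, this comultiplication corresponds to the multiplication on $\cA/\tau$ inherited from $\cA$, exhibiting $\cA_{\ast,\ast}/\tau$ as the Hopf algebra dual of $\cA/\tau$. Since everything here is essentially a matter of observing that $\tau$-freeness ensures quotient and dual commute, I do not anticipate any real obstacle; the only point worth spelling out with care is the compatibility of the module identification of the preceding lemma with both products simultaneously, which follows from naturality of $\Hom_{\M_2}(-,\M_2)$ applied to the short exact sequence $0\to \M_2\xrightarrow{\tau}\M_2\to \M_2/\tau\to 0$.
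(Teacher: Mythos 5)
Your proposal is correct and matches the paper's argument: apply the preceding lemma with $M=\cA$ (which is $\M_2$-free by \Cref{thm:voev}(ii)), then check compatibility with the Hopf algebra structures, reading off the explicit presentation by reducing the relation $\tau_i^2 = \tau\xi_{i+1}$ modulo $\tau$. You simply spell out in detail the compatibility check that the paper dismisses as easy.
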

\begin{proof}
We apply the previous lemma with $M = \cA$, which satisfies the conditions of the lemma by \Cref{thm:voev}(ii); we then just need to check that the isomorphism respects the Hopf algebra structures, but this is easy to do. 
\end{proof}

Recall that $P_s^t$ is dual to $\xi_t^{2^{s-1}}$ - this holds also in $\cA/\tau$. The degree-doubling isomorphism between $\cA_{\ast}^{\mathrm{cl}}$ and the quotient of $\cA_{\ast,\ast}$ by the $\tau_i$ allows one to prove the following, cf.~\cite[Prop.~2.3]{mp_modules}. 
\begin{cor}
  In $\cA/\tau$ we have $(P_s^t)^2 = 0$ for $s\leq t$. 
\end{cor}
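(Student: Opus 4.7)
The plan is to reduce the assertion to a known classical fact by exploiting the formal resemblance of $(\cA/\tau)_{*,*}$ to the dual of the classical odd-primary Steenrod algebra. By \Cref{lem:dualident}, we have
\[
(\cA/\tau)_{*,*} \cong \F_2[\xi_1,\xi_2,\ldots] \otimes \Lambda_{\F_2}(\tau_0,\tau_1,\ldots),
\]
and inspecting the coproducts of $\xi_k$ and $\tau_k$ written out just above \Cref{lem:dualident}, these are of precisely the same shape as the Milnor coproduct formulas for the dual of the classical odd-primary Steenrod algebra $\cA^{\mathrm{cl}}_*$ at an odd prime $p$. Dually, after the indicated degree-doubling identification, $\cA/\tau$ carries the same Milnor product structure as the classical odd-primary Steenrod algebra, and our $P_t^s$ (dual to $\xi_t^{2^{s-1}}$) corresponds to the classical Milnor basis element $P_t^s$ (dual to $\xi_t^{p^{s-1}}$).

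The classical fact --- going back to Milnor and spelled out in the form we need as \cite[Prop.~2.3]{mp_modules} --- is that at an odd prime $p$ one has $(P_t^s)^p = 0$ in $\cA^{\mathrm{cl}}$ for $s \leq t$. Transporting this identity across the formal identification above yields $(P_t^s)^2 = 0$ for $s \leq t$ in $\cA/\tau$.

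The only subtlety is verifying that the multiplicative comparison really does transport this particular relation. Since the Milnor product formula is determined entirely by the Hopf-algebra data (namely, the polynomial/exterior nature of the generators and the shape of the coproduct), and since this data is formally identical in $(\cA/\tau)_{*,*}$ and in the dual odd-primary Steenrod algebra, the relation $(P_t^s)^p = 0$ is produced by the same combinatorial computation in both settings. I do not anticipate any serious obstacle; the only work is bookkeeping the identification of generators and checking that the condition $s \leq t$ is grading-independent, so that it transfers without modification.
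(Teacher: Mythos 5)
There is a genuine gap: the ``formal identification'' you transport along does not exist. The dual odd-primary Steenrod algebra and $(\cA/\tau)_{\ast,\ast}$ are Hopf algebras over different fields ($\F_p$ versus $\F_2$), with different exponents in the coproduct ($\xi_{k-i}^{p^i}$ versus $\xi_{k-i}^{2^i}$) and hence different structure constants (multinomial coefficients reduced mod $p$ versus mod $2$); there is no isomorphism, nor any base change, relating the two. Correspondingly, the relation you want to transport even changes shape: at odd $p$ the classical statement concerns $p$-th powers, while you need squares. Setting $p=2$ in the odd-primary statement is not an application of that statement but a separate assertion that must be proved at $p=2$, and $p=2$ is exactly where naive intuition fails ($\Sq^2\Sq^2=\Sq^3\Sq^1\neq 0$ classically), so the case $s=t$ --- the whole point of the corollary --- cannot be dismissed as ``the same combinatorial computation''. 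To make your strategy rigorous you would have to actually carry out the Milnor product formula computation in $\cA/\tau$, i.e.\ verify that the odd-primary argument is uniform in the prime and goes through at $2$ in the presence of the exterior generators; that is a computation, not a transport.

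The paper's route uses an honest isomorphism instead, and it is with the classical \emph{2-primary} Steenrod algebra: the ideal $(\tau_0,\tau_1,\ldots)\subset(\cA/\tau)_{\ast,\ast}$ is a Hopf ideal (every term of $\psi(\tau_k)$ has a $\tau_j$ on one side), and the quotient is $\F_2[\xi_1,\xi_2,\ldots]$ with the Milnor coproduct, which is the classical dual Steenrod algebra $\cA^{\mathrm{cl}}_{\ast}$ up to degree doubling. Dualizing, the span of the $P^R$ is a subalgebra of $\cA/\tau$ isomorphic to $\cA^{\mathrm{cl}}$, under which the motivic $P_t^s$ (dual to $\xi_t^{2^{s-1}}$) corresponds to the classical Milnor element dual to $\xi_t^{2^{s-1}}$, i.e.\ the classical ``$P_t^{s-1}$''. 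The classical mod-$2$ fact (cf.~\cite[Prop.~2.3]{mp_modules}) says this element squares to zero precisely when $s-1<t$, that is $s\le t$; the shift of indexing is exactly what converts the classical condition $s<t$ into the motivic condition $s\le t$, and it is this shift --- not a change of prime --- that does the work your proposal attributes to the odd-primary analogy.
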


Let $(h,l)$ be a profile function for $\cA_{\ast,\ast}/\tau$; note that since the classes $\tau_i$ are now exterior, $l$ is now only required to be a function from the set $\{ 0,1,2,\ldots\}$ to the set $\{0,1\}$.  Let $D_{\ast,\ast}(h,l)$ denote the quotient of $\cA_{\ast,\ast}/\tau$ by the ideal generated by the relations determined by $h$ and $l$, and write $D(h,l)$ for the corresponding Hopf subalgebra of $\cA/\tau$. Similar to \Cref{prop:quotientminimal}, we can define quotient Hopf algebras of $\cA/\tau$ by imposing conditions on the functions $h$ and $l$. 
\begin{cond}\label{cond1tau}
  For all $i,j \ge 1$ we have $h(i) \le j + h(i+j)$ or $h(j) \le h(i+j)$.
  \end{cond}
  \begin{cond}\label{cond2tau}
  For all $i \ge 1,j \ge 0$ such that $l(i+j)=0$ we have $h(i) \le j$ or $l(j)=0$.   
  \end{cond}
\begin{prop}[Adams--Margolis \cite{am_sub}]
  If the profile function $(h,l)$ satisfies \Cref{cond1tau,cond2tau} then $D(h,l)$ is a Hopf subalgebra of $\cA/\tau$, and moreover every Hopf subalgebra is of this form. 
\end{prop}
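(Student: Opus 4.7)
The plan is to reduce to the classical Adams--Margolis classification at odd primes by exploiting the identification in \Cref{lem:dualident}.

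First, for the forward direction, I would mimic the proof of \Cref{prop:quotientminimal}. Assuming \Cref{cond1tau,cond2tau}, one checks that the ideal generated by $\xi_i^{2^{h(i)}}$ and the $\tau_j$ with $l(j)=0$ is a coideal in $(\cA/\tau)_{\ast,\ast}$. Using the formulas
\[
\psi(\xi_n^{2^{h(n)}}) = \sum_{i+j=n} \xi_i^{2^{j+h(n)}} \otimes \xi_j^{2^{h(n)}}, \qquad \psi(\tau_n) = \tau_n \otimes 1 + \sum_{i+j=n} \xi_{i}^{2^j} \otimes \tau_j,
\]
\Cref{cond1tau} forces one of the two tensor factors of each summand in the first expression into the ideal, and \Cref{cond2tau} (applied to $n=i+j$ with $l(n)=0$) does the same for the second. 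Hence $D_{\ast,\ast}(h,l)$ inherits the structure of a quotient Hopf algebra, and dually $D(h,l)$ is a Hopf subalgebra of $\cA/\tau$.

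For the converse, the key observation is that $(\cA/\tau)_{\ast,\ast}$, viewed as a Hopf algebra over $\F_2$, is structurally just the classical odd-primary dual Steenrod algebra $\cA^{\mathrm{cl,odd}}_\ast \cong \F_p[\xi_1,\xi_2,\ldots]\otimes\Lambda(\tau_0,\tau_1,\ldots)$ with the same coproduct formulas (only the bidegrees of the generators differ from the classical ones, which is irrelevant for the Hopf-algebraic classification). Since $(\cA/\tau)_{\ast,\ast}$ is of finite type over the field $\F_2$, Hopf subalgebras of $\cA/\tau$ are in bijection with quotient Hopf algebras of $(\cA/\tau)_{\ast,\ast}$ via $\F_2$-linear duality. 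The classical Adams--Margolis theorem \cite{am_sub} classifies such quotients of $\cA^{\mathrm{cl,odd}}_\ast$ as precisely the quotients by ideals of the form $(\xi_i^{2^{h(i)}}, \tau_j : l(j)=0)$ where $(h,l)$ satisfies the two numerical conditions, which are exactly \Cref{cond1tau,cond2tau}. Transporting this classification through the isomorphism of Hopf algebras yields the claim.

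The main (and in fact only) subtle point will be checking that the finite-type hypothesis needed for the Hopf-subalgebra/quotient-Hopf-algebra duality holds in the bigraded setting; one must confirm that each bidegree of $\cA/\tau$ is finite-dimensional over $\F_2$, which follows from the monomial basis description of $\cA_{\ast,\ast}$ in \Cref{lem:dualfree} after reduction mod $\tau$. Everything else is a direct transcription of the classical argument in \cite{am_sub}, which is why the result can reasonably be attributed to Adams--Margolis.
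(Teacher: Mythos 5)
Your forward direction is correct and is exactly the paper's argument: using \Cref{cond1tau,cond2tau} to check that the ideal generated by the $\xi_i^{2^{h(i)}}$ and the $\tau_j$ with $l(j)=0$ is a coideal, as in \Cref{prop:quotientminimal}, and then dualizing. The gap is in the converse. You propose to quote the classical Adams--Margolis classification and ``transport it through the isomorphism of Hopf algebras'' between $(\cA/\tau)_{\ast,\ast}$ and the odd-primary dual Steenrod algebra. No such isomorphism exists: the classical odd-primary object is a Hopf algebra over $\F_p$ for an odd prime $p$, its coproduct involves $p$-th powers $\xi_{k-i}^{p^i}$, and its generators sit in degrees $2(p^i-1)$ and $2p^i-1$, whereas $(\cA/\tau)_{\ast,\ast}$ is an $\F_2$-algebra whose coproduct involves squares and whose generators sit in topological degrees $2^{i+1}-2$ and $2^{i+1}-1$. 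So the desired classification is not a formal consequence of the theorem proved in \cite{am_sub}; it is an analogue (``$p=2$ with exterior $\tau_i$'s'') whose proof has to be re-run. This is precisely why the paper says the statement ``is not quite proved'' in \cite{am_sub} and why its sketch redoes the Adams--Margolis induction, introducing the subalgebras $(P_n)_{\ast,\ast}$ generated by $\xi_1,\ldots,\xi_n$ and $(R_n)_{\ast,\ast}$ generated by all $\xi_i$ together with $\tau_0,\ldots,\tau_n$, and proving the analogous classification for these by induction on $n$.

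Relatedly, your parenthetical that the degrees are ``irrelevant for the Hopf-algebraic classification'' is too quick: the Adams--Margolis arguments are degree arguments (connectivity, lowest-degree elements not in a given subalgebra, etc.), so one must check they go through with the motivic topological degrees; the paper flags exactly this, noting that ``degree'' must be read as topological degree and that the weight plays no role. What makes the adaptation work is that squaring is the Frobenius over $\F_2$, so the structural role played by $p$-th powers in the odd-primary proof is played by squares here---but verifying this adaptation is the actual content of the converse, not the finite-type duality point you single out as the ``only subtle point'' (that part is indeed routine, by \Cref{lem:dualfree} reduced mod $\tau$).
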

\begin{proof}[Sketch of proof.]
  This is not quite proved in \cite{am_sub}, however the proof is analogous. First, it is easy to check, as in \Cref{prop:quotientminimal}, that the coproduct passes to the quotient, so that $D_{\ast,\ast}(h,l) = (\cA/\tau)_{\ast,\ast}/I(h,l)$ is a quotient Hopf algebra, and hence $D(h,l)$ is a Hopf subalgebra. As in \cite{am_sub}, to prove that every Hopf subalgebra is of this form, we define $(P_n)_{\ast,\ast}$ to be the polynomial subalgebra of $\cA_{\ast,\ast}$ generated by $\xi_1,\ldots,\xi_n$, and $(R_n)_{\ast,\ast}$ to be the subalgebra generated by $\xi_1,\xi_2,\ldots$ and $\tau_0,\tau_1,\ldots,\tau_n$. For $n=-1$ we interpret $(R_{-1})_*$ as $P_\infty$ and for $n=\infty$ we interpret $(R_{\infty})_{\ast,\ast}$ as $\cA_{\ast,\ast}$. One then proves the obvious analog of the current proposition for $P_n$ and $R_n$, via an induction on $n$, using the exact arguments given by Adams and Margolis. The only minor thing to keep in mind is that when Adams and Margolis talk about degree, this refers only to the topological degree - the motivic weight does not play a role. 
\end{proof}

Since $(Q_i)^2 = 0$ and $(P_t^s)^2 = 0$ for $s \leq t$ in $\cA/\tau$, then given an $\cA/\tau$-module $M$, we can define Margolis homology groups with respect to these elements:
\[
H(M;Q_i) = \frac{\ker Q_i \colon M \to M}{\im Q_i\colon  M \to M} \quad \text{and} \quad H(M;P_t^s) = \frac{\ker P_t^s \colon M \to M}{\im P_t^s \colon M \to M.}
\]

For the following, we can then essentially quote the proof of the main theorem of \cite{mw_vanishing}. Note that here $(m-1)$-connected refers to the topological degree. 

\begin{thm}[Miller--Wilkerson \cite{mw_vanishing}]\label{thm:amodtmargolis}
Let $A$ be a Hopf subalgebra of $\cA/\tau$, and let $M$ be an $(m-1)$-connected $A$-module. 
\begin{enumerate}[(i)]
\item If $H(M;P_t^s) = 0$ for all $P_t^s \in A$ with $s\leq t$ and $H(M;Q_i)=0$ for all $Q_i \in A$, then $M$ is $A$-free. 
\item If $H(M;P_t^s) =0$ for all $P_t^s \in A$ such that $s\leq t$ and $|P_t^s| < d$ and if $H(M;Q_t) = 0$ for all $Q_t \in A$ such that $|Q_t| < d$, then $\Tor^A_{p,q,w}(\F_2,M) = 0$ whenever $q < dp-c$ with $c$ depending only on $d$ and $m$. Moreover, if $M$ is of finite type, then $\Ext_A^{p,q}(M,\F_2) = 0$ for $q<dp-c$. 
\end{enumerate}
\end{thm}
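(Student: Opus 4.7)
The strategy is to follow Miller--Wilkerson's proof for the classical odd-primary Steenrod algebra almost verbatim. By \Cref{lem:dualident}, the dual $(\cA/\tau)_{\ast,\ast}$ has the algebraic shape of an odd-primary dual Steenrod algebra (polynomial on $\xi_i$ tensored with exterior on $\tau_i$), so all $P_t^s$ with $s\leq t$ and all $Q_i$ square to zero and their Margolis homologies are defined. The motivic weight rides along passively: it is preserved by every computation and enters no vanishing condition, since the distinguished classes all have fixed topological degrees.

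The induction runs over the size of $A \subseteq \cA/\tau$, organised by the profile-function classification just established. At each step I would pick an extremal element $x \in A$ (a $Q_i$ or some $P_t^s$ with $s\leq t$) so that $B \coloneqq \F_2[x]/(x^2)$ is a normal sub-Hopf algebra of $A$, and apply the Cartan--Eilenberg change-of-rings spectral sequence
\[
E_2^{p,q} = \Ext^{p}_{\kos{A}{B}}(\F_2,\, \Ext^{q}_B(\F_2, M)) \Longrightarrow \Ext^{p+q}_A(\F_2, M).
\]
The base case is the quasi-elementary Hopf algebra $\F_2[x]/(x^2)$, where freeness of a module is equivalent to $H(M;x) = 0$ and the corresponding $\Ext$ vanishing line is immediate from the minimal resolution $\cdots \xr{x} \F_2[x]/(x^2) \xr{x} \F_2[x]/(x^2) \to \F_2$.

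For part (i), the inductive step requires showing that when the Margolis hypotheses hold for $M$ as an $A$-module, the analogous hypotheses hold for $\Ext^*_B(\F_2, M) \cong H(M;x)$ regarded as a module over $\kos{A}{B}$; this compatibility is the technical core of Miller--Wilkerson and uses commutator identities among the $P_t^s$ and $Q_i$ inside $A$. For part (ii) the same induction is run, but the degree bound $|x| < d$ is tracked through the spectral sequence to yield a vanishing line of slope $d$, with the connectivity constant $c$ picking up an additive shift at each inductive step. The finite-type hypothesis is then used at the end to dualise $\Tor$-vanishing into $\Ext$-vanishing.

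The principal obstacle is verifying that Miller--Wilkerson's commutator analysis and degree estimates really transport to $\cA/\tau$. The key algebraic inputs they need --- that $Q_i$ and $P_t^s$ (for $s\leq t$) all square to zero and satisfy the appropriate commutation relations --- hold in $\cA/\tau$ precisely because reducing modulo $\tau$ converts the relation $(P_t^t)^2 = \tau \cdot (\cdots)$ in $\cA$ into the desired $(P_t^t)^2 = 0$. Since their estimates use only topological degrees, the weight grading introduces no new phenomena, and each step of their induction should carry over unchanged.
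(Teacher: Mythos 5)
Your high-level plan (transplant Miller--Wilkerson to $\cA/\tau$, noting that the weight grading is inert and that the relevant squares vanish modulo $\tau$) is exactly the paper's, but the inductive mechanism you propose does not work as stated, and it sidesteps the one genuinely delicate point. First, for $2\le s\le t$ the element $P_t^s$ is \emph{not} primitive in $A$: since $\xi_t^{2^{s-1}}=\xi_t^{2^{s-2}}\cdot\xi_t^{2^{s-2}}$ in the (monomial-basis) dual, the coproduct of $P_t^s$ contains the term $P_t^{s-1}\otimes P_t^{s-1}$, so $\F_2[P_t^s]/((P_t^s)^2)$ is not even a subcoalgebra of $A$, let alone a normal sub-Hopf algebra. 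The only monogenic sub-Hopf algebras available to you are $E[Q_i]$ and $E[P_t^1]$. Second, even when such a normal $E[x]$ exists, the quotient $\kos{A}{E[x]}$ typically leaves the class over which your induction is organised: for $A$ dual to $\F_2[\xi_1,\xi_2]/(\xi_1^4,\xi_2^2)$ (an allowed profile), quotienting by the normal $E[P_2^1]$ yields the Hopf algebra dual to $\F_2[\xi_1]/\xi_1^4$, which violates the Adams--Margolis condition and hence is not a Hopf subalgebra of $\cA/\tau$; quotienting by $E[P_1^1]$ is worse, since the putative complementary subalgebra of the dual is not a subcoalgebra at all. So the induction ``by the profile-function classification'' does not close.

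The paper (following Miller--Wilkerson) runs the extension the other way: one peels off a monogenic \emph{quotient} $B\to A\to E[x]$, where $x$ is dual to the top admissible power of the lowest-degree generator of $A_{\ast,\ast}$, so that $B$ is again a Hopf subalgebra of $\cA/\tau$ of profile form and the module $M$ is unchanged in the inductive hypothesis. The price is that this decomposition unavoidably produces $x=P_t^s$ with $s>t$ (already for the example above, the bottom quotient is $E[P_1^2]$), where $x^2\ne 0$, no Margolis homology is defined, and no vanishing hypothesis is available; handling this case is the content of Miller--Wilkerson's Prop.~4.1 and is precisely the step the paper flags as the only thing needing verification mod $\tau$. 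Your proposal never confronts these elements, because your choice of $x$ is restricted to square-zero classes---but that restriction is exactly what cannot be arranged. A smaller confusion: in part (i) the module you must control over the quotient algebra is $\F_2\otimes_{E[x]}M$ (or $\Hom_{E[x]}(\F_2,M)$), since $H(M;x)$ vanishes by hypothesis; transferring the Margolis conditions to $H(M;x)$ is not the relevant statement there. To repair the argument you should adopt the quotient-extension decomposition and check, as the paper does, that Miller--Wilkerson's treatment of the three cases ($Q_{t-1}$; $P_t^s$ with $s\le t$; $P_t^s$ with $s>t$) goes through verbatim in $\cA/\tau$.
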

\begin{proof}[Sketch of proof.]
  As noted, we can simply quote Miller--Wilkerson, but for the benefit of the reader we outline the argument. Following their lead, we only prove (ii) and leave (i) for the reader. The first step is to reduce to $A$ being finite-dimensional using \cite[Prop.~3.2]{mw_vanishing}. The proof is then by induction on the $\F_2$-dimension of $A$, using the observation that any finite-dimensional Hopf subalgebra of $\cA/\tau$ can be built out of extensions of the form 
  \[
B \to A \to E[x],
  \]
 where $E[x]$ refers to the Hopf algebra $E[x] = \F_2[x]/(x^2)$. 

Now given the dual of $A$ in the form $D_{\ast,\ast}(h,k)$, we let 
\[
t = \min \{ j \colon h(j) >0 \text{ or } k(j-1)>0 \}. 
\]

In the case where $h(t)=0$, there is an extension of the form 
\[
B \to A \to E[Q_{t-1}]. 
\]
If $h(t)>0$, then we have an extension of the form
\[
B \to A \to E[P_t^s],
\]
and we naturally need to consider the cases where $s\leq t$ and $s > t$ separately. All three cases are handled in the same way as in \cite{mw_vanishing}; perhaps the only thing to check is that Prop.~4.1 of \emph{loc.~cit.} still holds, but it is easy to see that the same proof works in this case.
\end{proof}
\section{Margolis homology, projective modules, and vanishing lines}\label{sec:proof}
In this section, we complete the proof of the main theorem and return to the motivating examples in the introduction. We begin with a definition, followed by a key lemma, essentially appearing in \cite{gir_picard}. 

\begin{defn}
 We say that a bigraded module $M^{t,w}$ is $(m-1)$-connected provided $M^{t,w} = 0$ when $t<m$, and it is connective if it is $(m-1)$-connected for some $m$. $M$ is of finite type if it is connective and each $M^{t,\ast}$ is a finitely generated $\M_2$-module.
\end{defn}
\begin{lem}\label{lem:finite}
  Let $A$ be a connected Hopf algebra over $\M_2$ which is free and of finite type over $\M_2$, and let $M$ be an $A$-module, also free and of finite type over $\M_2$. Then the following conditions are equivalent:
  \begin{enumerate}[(i)]
    \item $M$ is projective as an $A$-module. 
    \item $M/\tau$ is projective as an $A/\tau$-module. 
    \item $M/\tau$ is free as an $A/\tau$-module. 
    \item $M$ is free as an $A$-module. 
  \end{enumerate}
\end{lem}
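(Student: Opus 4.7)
The proof is a cycle of implications. The implication (iv) $\Rightarrow$ (i) is immediate since free modules are projective, and (iii) $\Rightarrow$ (ii) is the same. For (i) $\Rightarrow$ (ii), one notes that $M/\tau \cong M \otimes_A (A/\tau)$ (since $\tau$ lies in the central subring $\M_2 \subset A$), so if $M$ is a summand of $A^{\oplus I}$ over $A$, then $M/\tau$ is a summand of $(A/\tau)^{\oplus I}$ over $A/\tau$, hence projective. For (ii) $\Rightarrow$ (iii), one applies the classical fact (Margolis, Prop.~11.2.2) that projectives over a connected graded Hopf algebra of finite type over a field are free: regarding $A/\tau$ as singly graded by topological degree, $A/\tau$ is connected (since $A^{0,*} = \M_2$ gives $(A/\tau)^{0,*} = \F_2$) and of finite type over $\F_2$ because $A$ is of finite type over $\M_2$.

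The essential content of the lemma is therefore (iii) $\Rightarrow$ (iv), which I would prove by a Nakayama-type argument with $\tau$-adic termination. Choose an $A/\tau$-basis $\{\bar x_\alpha\}_{\alpha \in I}$ of $M/\tau$; since $A/\tau$ is connected of finite type and $M/\tau$ is of finite type and bounded below in topological degree, one can arrange that only finitely many $\bar x_\alpha$ lie in any given topological degree. Lift each to $x_\alpha \in M$ of the same bidegree and define the $A$-linear map
\[
f \colon F := \bigoplus_\alpha A\{e_\alpha\} \longrightarrow M, \qquad f(e_\alpha) = x_\alpha.
\]
This is well-defined in each bidegree, and by construction $f \otimes_{\M_2} \F_2$ is an isomorphism.

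To promote this mod-$\tau$ isomorphism to an isomorphism of $A$-modules, the key observation is that both $F$ and $M$ are $\M_2$-free of finite type, so for each topological degree $t$ the slices $F^{t,*}$ and $M^{t,*}$ are finite direct sums of shifts of $\M_2$; in particular, they vanish below some weight. For surjectivity, given $y \in M^{t,w}$, write $\bar y = \sum_\alpha \bar a_\alpha \bar x_\alpha$ in $M/\tau$, lift the $\bar a_\alpha$ to $a_\alpha \in A$, and observe that $y - f(\sum_\alpha a_\alpha e_\alpha) = \tau y_1$ for some $y_1 \in M^{t, w-1}$; iterating produces $y_2, y_3, \ldots$ in strictly decreasing weights and the procedure terminates once the weight falls below the lower bound for $M^{t,*}$, so $y \in \im f$. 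Injectivity is dual: if $w \in \ker f$ then $\bar w = 0$ in $F/\tau$, so $w = \tau w'$, and $\tau f(w') = 0$ forces $f(w') = 0$ by $\M_2$-freeness of $M$; iterating and invoking the weight bound on $F^{t,*}$ gives $\ker f = 0$.

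The main obstacle is precisely this last step: translating a mod-$\tau$ isomorphism into an honest isomorphism without appealing to $\tau$-adic completeness. The resolution rests crucially on the hypothesis that both $A$ and $M$ are $\M_2$-free and of finite type, which is exactly what forces each topological slice to be bounded in the direction opposite to multiplication by $\tau$, so that the naive $\tau$-adic iteration terminates after finitely many steps in each bidegree.
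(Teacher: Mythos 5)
Your proposal is correct and follows essentially the same route as the paper: the substantive implication (iii) $\Rightarrow$ (iv) is handled exactly as in the paper by lifting an $A/\tau$-basis to a map $F \to M$ from a free module and using $\M_2$-freeness and finite type to promote the mod-$\tau$ isomorphism (your bidegree-wise $\tau$-adic termination is just an elementwise rendering of the paper's five-lemma induction on $F/\tau^k \to M/\tau^k$). The easier implications differ only cosmetically: you use base change of a summand of a free module for (i) $\Rightarrow$ (ii) where the paper uses a change-of-rings $\Ext$ isomorphism, and you cite the classical projective-implies-free result over a connected Hopf algebra where the paper runs the minimal-resolution argument explicitly.
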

\begin{proof}
  (i) $\Rightarrow$ (ii): For $M$ an $A$-module, and $N$ an $A/\tau$-module, we have a change-of-rings isomorphism 
  \[
   \Hom_A(M,N) \cong \Hom_{A/\tau}(M/\tau,N)
  \]
  Since $M$ is free over $\M_2$, this derives to an isomorphism
  \[
   \Ext^p_A(M,N) \cong \Ext^p_{A/\tau}(M/\tau,N)
  \]
  Now if $M$ is projective over $A$, this proves that $\Ext^1_{A/\tau}(M/\tau,-)$ vanishes, so $M/\tau$ is projective over $A/\tau$.\\
  (ii) $\Rightarrow$ (iii): For a connected Hopf algebra over a field, projective implies free for connective modules. To see this, for a projective module $N$ form the minimal free resolution $F_i$. This has the property that all the differentials have coefficients in the augmentation ideal of $A/\tau$. So $\Ext_{A/\tau}^p(N,k) \cong \Hom_{A/\tau}(F_p,k)$, and $N$ projective implies $F_1=0$. But then $F_0\to N$ is an isomorphism and $N$ is free. \\
  (iii) $\Rightarrow$ (iv): Choose a basis of $M/\tau$ over $A/\tau$. Lifting that to $M$, we get a map from a corresponding free $A$-module $F\to M$. By assumption, that map induces an isomorphism $F/\tau\to M/\tau$. Since $M$ is $\M_2$-free, $M/\tau \cong \tau^kM/\tau^{k+1}$, and analogously for $F$. By induction via the five-lemma, we see that the induced map $F/\tau^k \to M/\tau^k$ is an isomorphism for any $k$.
  Since $M$ is finite type over $\M_2$, this implies that $F\to M$ is an isomorphism.\\
  (iv) $\Rightarrow$ (i) is clear.
\end{proof}

Finally, we note the following relation between Margolis homology groups in $\cA/\tau$ and in $\cA$.
\begin{lem}\label{lem:marcomparasion}
Let $M$ be an $\cA$-module, which is free and of finite type over $\M_2$, and let $x\in \cA$ be such that $x^2$ acts trivially on $M$. Then $H(M;x) = 0$ if and only if $H(M/\tau;x) = 0$. 
\end{lem}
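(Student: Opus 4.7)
The plan is to produce a cyclic long exact sequence relating $H(M;x)$ to $H(M/\tau;x)$ via multiplication by $\tau$, and then exploit the finite-type hypothesis to conclude.

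First I would note that, because $M$ is $\M_2$-free, $\tau$ acts injectively on $M$, so we have a short exact sequence
\[
0 \to M \xrightarrow{\tau} M \to M/\tau \to 0.
\]
Since $\tau$ lies in the ground ring $\M_2$, it commutes with the action of $x \in \cA$; combined with the hypothesis that $x^2$ acts trivially on $M$ (hence also on $M/\tau$), this is a short exact sequence of modules over $E := \F_2[x]/(x^2)$. Using the periodic minimal free resolution $\cdots \xrightarrow{x} E \xrightarrow{x} E \to \F_2$ of $\F_2$ over $E$ identifies $\Tor^E_i(\F_2,-) \cong H(-;x)$ for $i \ge 1$, and the long exact sequence in $\Tor^E_\bullet(\F_2,-)$ then yields the cyclic (hexagonal) exact sequence
\[
\cdots \to H(M;x) \xrightarrow{\tau} H(M;x) \to H(M/\tau;x) \to H(M;x) \xrightarrow{\tau} H(M;x) \to H(M/\tau;x) \to \cdots
\]

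The forward implication $H(M;x) = 0 \Rightarrow H(M/\tau;x) = 0$ is then immediate. For the converse, suppose $H(M/\tau;x) = 0$; then exactness forces $\tau$ to act as a bijection on $H(M;x)$.

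To conclude I would invoke finiteness. Since $M$ is free and of finite type over $\M_2$, in each topological degree $t$ the slice $M^{t,*}$ is a finitely generated free $\F_2[\tau]$-module, and is therefore bounded below in weight $w$; the subquotient $H(M;x)^{t,*}$ inherits the same boundedness. Because $\tau$ shifts weight by a fixed nonzero amount, a bijective $\tau$-action together with boundedness-below in weight forces $H(M;x)^{t,*} = 0$ for every $t$, giving $H(M;x) = 0$. The only step requiring a bit of care is packaging the long exact sequence above (checking that the $\tau$-multiplication on $M$ really induces $\tau$-multiplication on $H(M;x)$, and tracking bidegrees of $x$), but since $\tau$ is central and $M$ is $\M_2$-flat this is formal; I do not anticipate a real obstacle.
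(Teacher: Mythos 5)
Your proof is correct and follows essentially the same route as the paper: the short exact sequence $0 \to M \xrightarrow{\tau} M \to M/\tau \to 0$, the resulting (periodic) long exact sequence in Margolis homology, and then bijectivity of $\tau$ on $H(M;x)$ combined with the weight-boundedness coming from $M$ being free and of finite type over $\M_2$. Your derivation of the exact sequence via $\Tor$ over $\F_2[x]/(x^2)$ is just a more explicit packaging of the same step, so there is nothing substantive to change.
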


\begin{proof}
The long exact sequence in Margolis homology associated to the short exact sequence $0 \to M \xr{\cdot \tau} M \to M/\tau \to 0$ shows that if $H(M;x) = 0$, then $H(M/\tau;x) = 0$. Conversely, if $H(M/\tau;x) = 0$, then $H(M;x) \xr{\cdot \tau} H(M;x)$ is an isomorphism. By assumption on $M$, $H(M;x)$ vanishes in low motivic weight (depending on the degree), and since $\tau$ is an isomorphism, it must vanish in all weights, and hence $H(M;x) = 0$.\footnote{This argument is similar to that given in \cite[Lem.~3.4]{gir_picard}.}
\end{proof}

We now give the proof of our main theorem. 
\begin{thm}\label{thm:mainthm}
  Let $(h,k)$ be a free profile function, and $B=B(h,k)$ the corresponding Hopf subalgebra of $\cA$. Let $M$ be a $B$-module that is free and of finite type over $\M_2$. 
  \begin{enumerate}[(i)]
    \item If $H(M/\tau;P_t^s) =0$ for all $P_t^s \in B$ with $s\leq t$ and $H(M/\tau;Q_t) = 0$ for all $Q_t \in B$, then $M$ is $B$-free. 
    \item If $H(M/\tau;P_t^s) =0$ for all $P_t^s \in B$ such that $s\leq t$ and $|P_t^s| < d$ and if $H(M/\tau;Q_t) = 0$ for all $Q_t$ such that $|Q_t| < d$, then $\Ext_B^{p,q}(M,\M_2) = 0$ for all $(p,q)$ such that $q < dp-c$ with $c$ depending only on $d$ and the connectivity of $M$. 
  \end{enumerate}
\end{thm}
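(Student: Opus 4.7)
The approach is to reduce everything to the mod-$\tau$ setting, where Theorem \ref{thm:amodtmargolis} (Miller--Wilkerson applied to $\cA/\tau$) applies directly, and then lift the conclusion back through a $\tau$-Bockstein argument. Since $(h,k)$ is free, $B_{\ast,\ast}$ is $\M_2$-free by Theorem \ref{thm:freeclassification}, so $B_{\ast,\ast}/\tau$ is dual to $B/\tau$ and realizes the latter as a Hopf subalgebra $D(h, k_\tau)$ of $\cA/\tau$, where $k_\tau(j) = \min(k(j), 1)$; crucially, this Hopf subalgebra contains exactly the same operations $P_t^s$ and $Q_t$ as $B$. The module $M/\tau$ inherits connectivity and finite type from $M$, and Lemma \ref{lem:marcomparasion} translates the Margolis-homology hypotheses of our theorem verbatim into those demanded by Theorem \ref{thm:amodtmargolis} for $M/\tau$ over $B/\tau$.

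For part (i), Theorem \ref{thm:amodtmargolis}(i) immediately yields that $M/\tau$ is $B/\tau$-free, and Lemma \ref{lem:finite} upgrades this to $M$ being $B$-free.

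For part (ii), Theorem \ref{thm:amodtmargolis}(ii) supplies a constant $c'$, depending only on $d$ and the connectivity of $M$, such that $\Ext_{B/\tau}^{p,q}(M/\tau, \F_2) = 0$ for $q < dp - c'$. The change-of-rings isomorphism from the proof of Lemma \ref{lem:finite} (which uses $\M_2$-freeness of $M$) identifies this with $\Ext_B^{p,q}(M, \F_2)$, giving the same vanishing for $\Ext$ over $B$ with $\F_2$-coefficients in every weight. Applying $\Ext_B^*(M,-)$ to the short exact sequence $0 \to \M_2 \xr{\tau} \M_2 \to \F_2 \to 0$ produces a long exact sequence in which, whenever both $\Ext_B^{p,q,*}(M, \F_2)$ and $\Ext_B^{p-1,q,*}(M, \F_2)$ vanish, multiplication by $\tau$ acts bijectively on $\Ext_B^{p,q,*}(M, \M_2)$; this happens in the shifted range $q < dp - (c' + d)$. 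Since $M$ and $B$ are connective and of finite type over $\M_2$, $\Ext_B^{p,q,*}(M, \M_2)$ has bounded weight support in each bidegree, so a $\tau$-bijection on it forces the group to vanish. Setting $c = c' + d$ yields the advertised vanishing line.

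The main obstacle is the last step: confirming that $\Ext_B^{p,q,*}(M, \M_2)$ has bounded-weight support in each fixed $(p,q)$, so that the $\tau$-bijection produced by the Bockstein argument can be upgraded to outright vanishing. This is a finite-type argument via a minimal $B$-free resolution of $M$, and is the single point at which the $\M_2$-freeness and finite-type hypotheses on $M$ are used substantively beyond the change-of-rings step. Everything else is a mechanical transport of structure through the $\tau$-quotient together with the odd-primary-style classification of Hopf subalgebras of $\cA/\tau$ established in Section \ref{sec:amodtau}.
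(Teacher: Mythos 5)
Your part (i) is exactly the paper's argument: reduce via \Cref{lem:finite} to freeness of $M/\tau$ over $B/\tau$, observe that $B$ and $B/\tau$ contain the same $P_t^s$ and $Q_t$, and quote \Cref{thm:amodtmargolis}(i). (Your invocation of \Cref{lem:marcomparasion} here is harmless but unnecessary, since the hypotheses are already stated in terms of $H(M/\tau;-)$.) For part (ii) you take a genuinely different, though closely related, route. The paper runs the algebraic $\tau$-Bockstein (Miller--Novikov type) spectral sequence $E_1 = \Ext^{\ast,\ast}_{B/\tau}(M/\tau,\F_2)[\tau] \Rightarrow \Ext^{\ast,\ast}_B(M,\M_2)$, cites Miller for convergence using the finite-type hypotheses, and notes that since $\tau$ has internal degree $0$ the $E_1$ vanishing line persists. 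You instead transport the mod-$\tau$ vanishing line across the change-of-rings isomorphism $\Ext_{B/\tau}(M/\tau,\F_2)\cong\Ext_B(M,\F_2)$ (which is exactly the derived isomorphism established inside the proof of \Cref{lem:finite}, valid because $M$ is $\M_2$-free), and then use the long exact sequence coming from $0\to\M_2\xr{\tau}\M_2\to\F_2\to 0$: vanishing of $\Ext_B^{p,q}(M,\F_2)$ and $\Ext_B^{p-1,q}(M,\F_2)$ gives $\tau$-bijectivity on $\Ext_B^{p,q,\ast}(M,\M_2)$, hence vanishing once one knows the weight support in each fixed $(p,q)$ is bounded on one side; your loss of $d$ in the constant ($c=c'+d$) is the correct bookkeeping. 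This is in effect the Bockstein argument unrolled degreewise, and it is sound: the bounded-weight claim is where the connectivity and finite-type hypotheses enter (a resolution of $M$ by finite-type free $B$-modules gives, in each fixed $(p,q)$, a one-sided bound on the weights of $\Hom_B(F_p,\M_2)$ and hence of its subquotient $\Ext^{p,q,\ast}$), playing exactly the role that convergence of the Bockstein spectral sequence plays in the paper. Your version is more elementary in that it avoids any convergence discussion, at the price of having to make this finiteness argument explicit; it is the same mechanism that the paper already uses in \Cref{lem:marcomparasion}, so no gap remains.
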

\begin{proof}
 
\begin{enumerate}[(i)]
  \item By \Cref{lem:finite} it suffices to show that $M/\tau$ is free as a $B/\tau$-module. It is easy to see that $P^s_t \in B$ if and only if $P^s_t \in B/\tau$, and similar for $Q_t$. The result then follows from \Cref{thm:amodtmargolis}. 

  \item By \Cref{thm:amodtmargolis} $\Ext^{p,q}_{B/\tau}(M/\tau,\F_2)$ has a vanishing line of the claimed form. Associated to the filtration of $B$ by powers of $\tau$ there is a trigraded-graded algebraic Miller--Novikov type Bockstein spectral sequence
  \[
E_1 = \Ext^{\ast,\ast}_{B/\tau}(M/\tau,\F_2)[\tau] \implies \Ext^{\ast,\ast}_{B}(M,\M_2). 
  \]
This is convergent by the assumptions on $M$ (see \cite[Sec.~8]{miller_relations}). Since $\tau$ has internal degree 0, the vanishing line of $\Ext^{p,q}_{B/\tau}(\M/\tau,\F_2)$ extends to the $E_1$-page of the spectral sequence. Differentials in a spectral sequence cannot increase the vanishing line, and the result follows. \qedhere
\end{enumerate}
\end{proof}
\begin{rem}\label{rem:otherfields}
   One can ask for generalizations of this result to other base fields $k$. Let $p$ be a prime, and $\ell \ne p$ the characteristic of $k$, so that we work with the algebra of bistable operations in the motivic cohomology of smooth $k$-schemes with $\Z/p$-coefficients. We start with the case $p=2$. In this case, it is known from Voevodsky's solution of the Milnor conjecture \cite{voe_2} that $H^{*,*}(\Spec k,\Z/2) \cong k_*^M(k)[\tau]$, where $k_*^M(k) = K^M_*(k)/2$ is the mod 2 Milnor $K$-theory of $k$.  If $k$ is algebraically closed, then $K_i^M(k)$ is known to be divisible for $i \ge 1$ \cite[III.~Ex.~7.2(b)]{weibelkbook} and $K_0^M(k) \cong \Z$, and so $k_*^M(k) \cong \Z/2$ concentrated in degree 0. It follows that the motivic cohomology of a point $H^{*,*}(\Spec k,\Z/2) \cong \F_2[\tau] \cong \M_2$. Moreover, the Steenrod algebra has the same form as presented in \Cref{sec:background} (in positive characteristic see \cite[Thm~5.6]{hko}) and hence our main theorem applies also to these fields. 

On the other hand, we do not yet see a way to extend the results to more general fields, even to $\mathbb{R}$. In this case $\cA$ is no longer a Hopf algebra, but rather a Hopf algebroid, and the dual Steenrod algebra becomes more complicated; there exist two distinguished elements $\tau$ and $\rho$ in the motivic cohomology of a point, and the dual Steenrod algebra takes the form 
  \[
\cA_{\ast,\ast} \cong H_{\ast,\ast}(\Spec k,\Z/2)[\tau_i,\xi_{i+1} \mid i \ge 0]/(\tau_i^2 + \tau \xi_{i+1} + \rho\tau_{i+1} + \rho \tau_0 \xi_{i+1}).
\]
 Additionally the element $\tau$ is no longer invariant, since its right unit is given by $\eta_R(\tau) = \tau + \tau_0 \rho$. One can quotient by the ideal $(\rho,\tau)$, but in this case we do not know how to prove an analog of \Cref{lem:finite}. Moreover, $\rho$ has bidegree $(1,1)$ ($(-1,-1)$ in homological grading conventions) and so a Bockstein spectral sequence from $\cA/\rho$ to $\cA$ can affect the vanishing line, and one would need to understand the differentials in the Bockstein spectral sequence in more detail.

If $p$ is odd, then we can make a similar conclusion as to above. In general, one has that the motivic Steenrod algebra is isomorphic to the odd-primary Steenrod algebra base-changed to the ring $H^{*,*}(\Spec(k),\Z/p)$, see \cite[Thm.~12.6]{voe_1} for characteristic 0, and \cite[Thm~5.6]{hko} for fields of positive characteristic.  If $k$ is algebraically closed, then the motivic cohomology of a point is $\F_p[\tau]$, and the motivic Steenrod algebra is simply the classic odd-primary Steenrod algebra base changed to $\F_p[\tau]$. In this case it is easy to prove a version of our main theorem. When $k$ is not algebraically closed the situation may still be tractable at odd primes; for example, in \cite[Prop.~1.1(2)]{1606.06085} Stahn shows that the motivic cohomology of $\Spec(\mathbb{R})$ is isomorphic to $\Z/p[\theta]$, with $|\theta| = (0,2)$, and one can also prove a version of our main theorem in this case. 
\end{rem}

We now return to the examples given in the introduction. 
\begin{exmp}\cite[Prop.~4.7]{gir_picard}
  \normalfont Let $\cA(1)$ be the Hopf subalgebra generated by $\Sq^1$ and $\Sq^2$, with dual 
\[
\cA(1)_{\ast,\ast} \cong \M_2[\xi_1,\tau_0,\tau_1]/(\xi_1^2, \tau_1^2,\tau_0^2 + \tau \xi_1). 
\]
We have $Q_0,Q_1$ and $P_1^1$ in $\cA(1)$, and so we see that a finite type $\cA(1)$-module $M$ is free only if:
\begin{enumerate}[(i)]
  \itemsep-0.3em
  \item $M$ is a free $\M_2$-module;
  \item $H(M/\tau;Q_0) = 0$;
  \item $H(M/\tau;Q_1) = 0$;
  \item $H(M/\tau;P_1^1) = 0$,
\end{enumerate}
which is in fact a slight strengthening of the result in \emph{loc.~cit.} (where $M$ is assumed to be finitely generated over $\M_2$). The converse also follows by an easy calculation. 

Our results also cover \cite[Prop.~3.2]{gi_vanishing}, namely that a bounded below $\cA(0)$-module of finite type is free as an $\cA(0)$-module if and only if it is free as an $\M_2$-module and $H(M;Q_0) = 0$; here we need to use \Cref{lem:marcomparasion} to lift from $H(M/\tau;Q_0) = 0$ to $H(M;Q_0)$ = 0. 
\end{exmp}
\begin{exmp}\cite{gi_vanishing}\label{ex:gi_vanishing}
\normalfont
Although it is not proved explicitly, the method of Guillou and Isaksen can easily be used to show that if $M$ is a bounded-below $\cA$-module of finite type that is free as a $\cA(0)$-module, then $\Ext_{\cA}^{p,q,w}(M,\M_2)$ has a vanishing line of slope 1.

To recover this from our work, observe that since $H(M/\tau,Q_0) = 0$, \Cref{thm:mainthm} applies with $d=2$, so that $\Ext^{p,q,w}_{\cA}(M,\M_2) = 0$ whenever $q < 2p -c$ (i.e. $M$ has a vanishing line of slope 1). 

Guillou and Isaksen prove in \cite[Prop.~7.2]{gi_vanishing} that for such an $M$ the map 
\begin{equation}\label{eq:h1}
h_1 \colon \Ext^{p,q,w}_{\cA}(M,\M_2) \to \Ext_{\cA}^{p+1,q+2,w+1}(M,\M_2)
\end{equation}
is an isomorphism if $q \le 3p-4$ and a surjection if $q \le 3p-1$. 
\begin{rem}
  Note that we use a different grading convention to \cite{gi_vanishing} - a class we write in $\Ext^{p,q,w}$ corresponds to a class in $\Ext^{q-p,p,w}$ under their conventions.
\end{rem}
To recover this from our results (at least the slope - we are not precise about the exact value of the constant), note that corresponding to $h_1 \in \Ext^{1,2,1}_{\cA}(M,M)$ there is an exact sequence of modules
\[
0 \to M \to M/\!/h_1 \to \Sigma^{-2,-1}M \to 0.
\]
It is easy to see that $(M/\!/h_1)/\tau$ has vanishing Margolis homologies with respect to $Q_0$ and $P_1^1$, so that $\Ext^{p,q,w}_{\cA}(M/\!/h_1,\M_2) = 0$ whenever $q < 3p -c'$ (i.e. $M/\!/h_1$ has a vanishing line of slope 1/2).  The long exact sequence
\[
\cdots \xr{h_1} \Ext_{\cA}^{p,q+2,w+1}(M,\M_2) \to \Ext_{\cA}^{p,q,w}(M/\!/h_1,\M_2)\to \Ext_{\cA}^{p,q,w}(M,\M_2) \xr{h_1}\cdots 
\]
implies that \eqref{eq:h1} is an injection when $q < 3p-c'$ and a surjection when $q < 3p+3-c'$. Hence, we see that $h_1$-multiplication is an isomorphism above a line of slope 1/2 as claimed. 
\end{exmp}
\bibliography{bib_motivic} 
\bibliographystyle{amsalpha}
\end{document}